\numberwithin{equation}{section}
\def\s{$\mbox{\c{s}}$}
\def\opn#1#2{\def#1{\operatorname{#2}}} 
\opn\Ker{Ker} \opn\Coker{Coker}  \opn\Hom{Hom} \opn\Im{Im}
\opn\End{End} \opn\Aut{Aut} \opn\defect{def} \opn\ord{ord}
\opn\id{id} \opn\dim{dim} \opn\det{det} \opn\tr{tr} \opn\grad{grad} \opn\lcm{lcm}
\opn\min{min} \opn\max{max} 
\opn\Span{Span}   \opn\rang{rang}  \opn\id{id} \opn\Ass{Ass} \opn\Min{Min}
\opn\GL{GL} \opn\SL{SL} \opn\mod{mod} \opn\diag{diag}
\opn\min{min} \opn\sgn{sgn} \opn\ini{in_<}  \opn\Mon{Mon} \opn\LC{LC_<} \opn\Hom{Hom} \opn\Ext{Ext} \opn\gini{gin_{<_{rev}}} \opn\gin{gin_{<}}
\opn\LT{LT_<}
\opn\s{supp} \opn\Tor{Tor} \opn\link{link} \opn\depth{depth} \opn\pd{pd} \opn\reg{reg} 
\newcommand{\compactfundif}{\textup{C}^{1}_{\textup{c}}(\mathbb{R}^{2}_{>0})}
\newcommand{\compactfun}{\textup{C}_{\textup{c}}(\mathbb{R}^{2}_{>0})}
\newcommand{\vanishfun}{\textup{C}_{\textup{0}}(\mathbb{R}^{2}_{>0})}
\newcommand{\supp}{\textup{supp}}
\newcommand{\der}{\textup{d}}
\date{}
\title{Fast fusion in a two-dimensional coagulation model}
\author[a]{Iulia Cristian}
\author[b]{Juan J. L. Vel\'{a}zquez}
\affil[a]{Institute for Applied Mathematics, University of Bonn, Endenicher Allee 60, 53115 Bonn, Germany
\href{mailto:cristian@iam.uni-bonn.de}{cristian@iam.uni-bonn.de}}
\affil[b]{Institute for Applied Mathematics, University of Bonn, Endenicher Allee 60, 53115 Bonn, Germany
\href{mailto:velazquez@iam.uni-bonn.de}{velazquez@iam.uni-bonn.de}}
\begin{document}
\maketitle
\newtheorem{teo}{Theorem}[section]
\newtheorem{ex}[teo]{Example}
\newtheorem{prop}[teo]{Proposition}
\newtheorem{obss}[teo]{Observations}
\newtheorem{cor}[teo]{Corollary}
\newtheorem{lem}[teo]{Lemma}
\newtheorem{prob}[teo]{Problem}
\newtheorem{conj}[teo]{Conjecture}
\newtheorem{exs}[teo]{Examples}
\newtheorem{alg}[teo]{\bf Algorithm}

\theoremstyle{definition}
\newtheorem{defi}[teo]{Definition}

\theoremstyle{remark}
\newtheorem{rmk}[teo]{Remark}
\newtheorem{ass}[teo]{Assumption}




\pagenumbering{arabic}

\begin{abstract}
 In this work, we study a particular system of coagulation equations characterized  by two values, namely volume $v$ and surface area $a$. Compared to the standard one-dimensional models, this model incorporates additional information about the geometry of the particles. We describe the coagulation process as a combination between collision and fusion of particles. We prove that we are able to recover the standard one-dimensional coagulation model when fusion happens quickly and that we are able to recover an equation in which particles interact and form a ramified-like system in time when fusion happens slowly.
\end{abstract}

\vspace{0.5cm}

\textbf{Keywords:} multidimensional coagulation equations; fusion term.

\tableofcontents

\section{Introduction}
Most of the works on coagulation equations assume that the particles are characterized by a single variable, usually the particle volume (or equivalent quantities like polymer length), see for instance \cite{Niethammer_2012,norris1999,1916ZPhy...17..557S,STEWART}. Nevertheless, other parameters that might provide insight about the geometry or other features of the particles are usually omitted. In a previous work (see \cite{cristian2022coagulation}), we study the mathematical properties of a class of coagulation equations in which the aggregating particles are characterized by two degrees of freedom, namely the volume $v$ and the surface area $a$. This type of models was introduced in \cite{book1, KOCH1990419}. More precisely, the model considered is the following:
\begin{align}
\partial_{t}f(a,v,t)+\partial_{a}[r(a,v)(c_{0}v^{\frac{2}{3}}-a)f(a,v,t)]=\mathbb{K}[f](a,v,t), \textup{  }  c_{0}:=(36\pi)^{\frac{1}{3}},
\label{strongfusioneq}
\end{align}
where 
\begin{align*}
    \mathbb{K}[f](a,v,t):=&\frac{1}{2}\int_{(0,a)\times(0,v)}K(a-a',v-v',a',v')f(a',v',t)f(a-a',v-v',t)\der v'\der a' \\
    &-\int_{(0,\infty)^{2}}K(a,v,a',v')f(a,v,t)f(a',v',t)\der v' \der a'.
\end{align*}

In this model, $f$ is the density of particles in the space of area and volume for any given time $t\geq 0$. The coagulation operator $\mathbb{K}[f]$ is the classical coagulation operator that was introduced by Smoluchowski (see \cite{1916ZPhy...17..557S}) and gives the coagulation rate of particles which evolve according to the following mechanism:
\begin{align*}
(a_{1},v_{1})+(a_{2},v_{2})\longrightarrow (a_{1}+a_{2},v_{1}+v_{2}).
\end{align*}
It is assumed that the particles attach to each other at their contact point and therefore in this way both the total area and volume of the particles involved in the process are preserved.

On the other hand, the fusion term $\partial_{a}[r(a,v)(c_{0}v^{\frac{2}{3}}-a)f(a,v,t)]$ describes an evolution of the particles towards a spherical shape. The dynamics generated by this term preserves the total number and volume of the particles. The term $c_{0}v^{\frac{2}{3}}-a$ indicates that the area of the particles tends to be reduced as long as it is larger than that of a sphere $c_{0}v^{\frac{2}{3}}$ (see Figure \ref{fig1} for a description of the complete coagulation process assumed in (\ref{strongfusioneq})).

\begin{figure}[H]%
\centering
\captionsetup{justification=centering}
\includegraphics[width=0.9\textwidth, height=3.1cm]{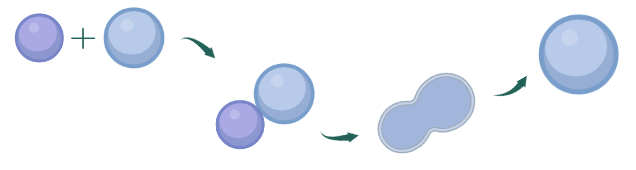}
\caption{Coagulation process: collision of particles followed by fusion}
\label{fig1}
\end{figure}

Additionally, $r(a,v)$ will indicate the fusion rate and describes how quickly the particles evolve towards the spherical shape and thus has units of the inverse of the fusion time. If the fusion kernel $r$ is very large compared with the coagulation rate, we expect that the particles become spherical in very short times. Therefore, it should be possible to approximate the solutions of (\ref{strongfusioneq}) by means of solutions of a coagulation model depending on only the variable $v$, i.e. an one-dimensional coagulation equation. On the contrary, in the particular case when $r\equiv 0$, fusion does not occur and particles attach at contact points forming a ramified-like system in time. Thus, when $r$ is very small compared with the coagulation rate, we can approximate the solutions of (\ref{strongfusioneq}) by means of solutions of a two-dimensional coagulation model without fusion depending on two variables, $a$ and $v$.

More precisely, we analyse the following model as $\Lambda\rightarrow 0$ and as $\Lambda\rightarrow\infty$:
\begin{align}
\partial_{t}f_{\Lambda}(a,v,t)+\frac{1}{\Lambda}\partial_{a}[r(a,v)(c_{0}v^{\frac{2}{3}}-a)f_{\Lambda}(a,v,t)]=\mathbb{K}[f_{\Lambda}](a,v,t).
\label{1 over epsilon strongfusioneq}
\end{align}

 We remark that the particles must satisfy the isoperimetric inequality, therefore the density $f_{\Lambda}$ should be supported in the region where $\{a\geq c_{0}v^{\frac{2}{3}}\}$. Moreover, the evolution generated by (\ref{strongfusioneq}) (or by (\ref{1 over epsilon strongfusioneq})) has the property that it preserves the set of measures supported in this region.

We assume $r(a,v)$ behaves like a power law of $a$ and $v$. For the coagulation kernel $K$, we assume that it has a weak dependence on the surface area of the interacting particles, but it can have a power law behavior in the volume of the coalescing particles.

Since collision does not change if we permute the colliding particles, i.e. $(a,v)\leftrightarrow(a',v')$, the coagulation kernel must satisfy the following symmetry property:
\begin{align}
    K(a,v,a',v') & =K(a',v',a,v),\label{kersym1}
\end{align}
for all $(a,v,a',v')\in(0,\infty)^{4}$.

Concerning the fusion kernel $r$, we assume that $r\in\textup{C}^{1}(\mathbb{R}_{>0}^{2})$ and that there exist constants $R_{0},R_{1}>0$ such that:
\begin{align}
   R_{0}a^{\mu}v^{\sigma}\leq r(a,v)\leq R_{1}a^{\mu}v^{\sigma}, \label{fusion_form}
\end{align}
for all $(a,v)\in(0,\infty)^{2}$ and some coefficients $\mu,\sigma\in\mathbb{R}$.

In order to control the mass of the solutions when $|a-c_{0}v^{\frac{2}{3}}|>0$ in  (\ref{1 over epsilon strongfusioneq}) if $\Lambda$ is small, we require the following technical assumptions on the fusion kernel $r$:
\begin{align}\label{ode_fusion}
\begin{cases}
\partial_{a}r(a,v)-\mu a^{-1} r(a,v)\geq 0,\textup{ and } \partial_{a}r(a,v)\leq B a^{-1}r(a,v),& \textup{ if } \mu>0;  \\
\partial_{a}r(a,v)(a-c_{0}v^{\frac{2}{3}})+r(a,v)\geq 0, \textup{ and } \partial_{a}r(a,v)\leq B a^{-1}r(a,v),&\textup{ if } \mu\leq 0,
\end{cases}
\end{align}
for all $(a,v)\in(0,\infty)^{2},$ with $a\geq c_{0}v^{\frac{2}{3}}$, and for some constant $B>0$. A particular case used in applications that satisfies the above mentioned properties is when $r(a,v)=Ra^{\mu}v^{\sigma}$, with $\mu\geq -1$ and $R\in[R_{0},R_{1}]$. The condition (\ref{ode_fusion}) is not optimal and it would be possible to impose weaker conditions on the fusion kernel. However, this would imply more involved arguments in the proofs later on. We impose the stronger condition (\ref{ode_fusion}) as our main goal is that the statements of our theorems hold for fusion kernels that behave as power laws, case which is included in condition (\ref{ode_fusion}).
\

In comparison to \cite{cristian2022coagulation}, since in this paper we are not interested in the long-time behavior of solutions, we do not assume homogeneity of neither the fusion nor coagulation kernel and we simply assume that they behave like power laws, see (\ref{fusion_form}) and (\ref{lower_bound_kernel}).
In \cite{cristian2022coagulation}, we restricted the analysis to coagulation and fusion kernels which rescale in a similar manner when the size of the particles is changed without modifying their geometry. This was needed for the study of the long-time behavior of particles as the fusion term was chosen in a desire for particles to form a spherical shape in time.  In particular, it meant that, if the particle volume is scaled by a factor $\lambda$, then the diameter is scaled with a factor $\lambda^{\frac{1}{3}}$ and the area scales like $\lambda^{\frac{2}{3}}$ and that we needed to impose the additional assumption that $\frac{2}{3}\mu+\sigma=\gamma-1$, where $\gamma$ is the homogeneity of the coagulation kernel and $\mu,\sigma$ are as in (\ref{fusion_form}). In this work, we treat the more general case of arbitrary $\mu, \sigma\in\mathbb{R}$. In order to deal with this case, we will need to obtain additional moment estimates.

Our main goal for this paper is to prove that all solutions of equation (\ref{1 over epsilon strongfusioneq}) which satisfy some very general moment estimates concentrate their mass around the isoperimetric line $\{a=c_{0}v^{\frac{2}{3}}\}$ as $\Lambda\rightarrow 0$ and tend in an appropriate sense to a measure which can be computed by solving a suitable one-dimensional coagulation equation. Moreover, we prove that solutions of (\ref{1 over epsilon strongfusioneq}) satisfying these moment estimates do exist. The limit measure $f$ of the sequence $\{f_{\Lambda}\}$ acts like a Dirac-like measure in the area variable, namely $f(v,t)=\delta(a-c_{0}v^{\frac{2}{3}})F(v,t)$, with $F$ satisfying the standard one-dimensional coagulation equation. We can then use the known results for the one-dimensional coagulation equations (for example, it was proven mathematically in \cite[Proposition 10.2.1]{book} for solutions in $\textup{C}([0,\infty);\textup{L}^{1}(\mathbb{R}_{>0}))$) to prove that the average volume of the particles increases in time. 

The reason we can reduce the evolution equation for the two-dimensional system to a one-dimensional one is that, as $\Lambda\rightarrow 0$, the fusion process takes place much faster than the collision process and then the particles are transported close to the isoperimetric line $\{a=c_{0}v^{\frac{2}{3}}\}$ almost instantaneously. Equivalently, fusion happens immediately after collision.

On the other hand, if we let $\Lambda\rightarrow \infty$ in equation (\ref{1 over epsilon strongfusioneq}), we recover a two-dimensional coagulation model, in which particles attach to each other at a contact point, forming a ramified-like system in time.  A physical interpretation of this is that, as $\Lambda\rightarrow \infty$, the effect of the fusion term becomes negligible (see Figure \ref{fig2}).

\begin{figure}[H]%
\centering
\captionsetup{justification=centering}
\includegraphics[width=0.5\textwidth, height=5cm]{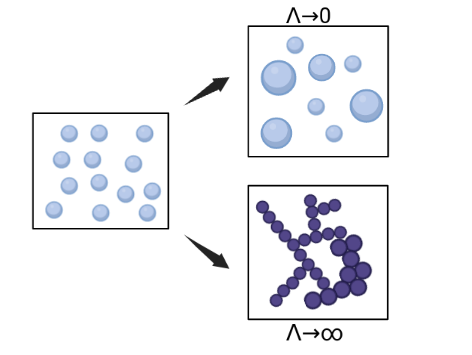}
\caption{System of particles under different fusion times}
\label{fig2}
\end{figure}

The evolution of a system of coagulation equations which can be described by area and volume and where the particles undergo a fusion process after they come in contact has been described in \cite[Chapter 12]{book1}. The specific problem under consideration was the study of aerosol flame reactors. A heuristic analysis of the shapes for the resulting particles for different values of the ratio between the average fusion time and the average collision time can be found in there. This ratio is given by the parameter $\Lambda$ in our model in (\ref{1 over epsilon strongfusioneq}). In the types of models considered in \cite[Chapter 12]{book1}, it is seen that, for small particles or high temperatures, the parameter $\Lambda$ is small. On the contrary, for sufficiently large particles or after the gas has been cooled, we must assume that $\Lambda$ is very large. The results in this paper provide a precise mathematical formulation of the behavior that has been suggested in \cite[Chapter 12]{book1}. The results in this work are complementary to those in \cite{cristian2022coagulation}, in which we consider a particular form of the fusion term for which $\Lambda\approx 1$ for arbitrary times.

Coagulation equations for particle distributions characterized by a single variable have been extensively studied. In particular, the long-time behavior for coagulation equations for which solutions can be explicitly computed has been studied in  \cite{Menon_2004}. The existence of self-similar solutions for general classes of kernels has been obtained in \cite{ESCOBEDO,articlefournier}. 

Multi-dimensional coagulation equations have not been studied as much in the mathematical literature as their one-dimensional counterpart. Several discrete multi-component coagulation problems which are relevant in aerosol physics have been mentioned in \cite{wattis1}. A discrete version of the model in (\ref{strongfusioneq}) has been studied in \cite{wattis2}. The model considered in there includes coagulation of particles and an effect similar to the fusion of particles in (\ref{strongfusioneq}), which has been termed compaction. The diameter of the particles is restricted by the total number of monomers as well as by the isoperimetric inequality. The coagulation and the fusion rates are assumed to be constant. Due to this, the model considered in \cite{wattis2} is explicitly solvable using generating functions. The long-time behavior of the solutions which depends on the ratio between the fusion and coagulation kernels has been then analysed using the explicit formulas of the solutions. 

In \cite{multicom1, multicom2,multicomponent3}, the mathematical properties of some classes of coagulation equations describing clusters that are composed of several types of monomers with different chemical composition are analysed. More recently, uniqueness of the solutions for the models of multi-component coagulation equations considered in \cite{multicom1, multicom2,multicomponent3} has been studied in \cite{throm}.

 More precisely, it has been proven in \cite{multicom1, multicomponent3} that time-dependent solutions for the multi-dimensional coagulation equation concentrate along a line in the space of cluster concentrations for long times for coagulation kernels for which the scaling properties of each of the components are the same for all the species that compose the system. However, as the surface area and volume appear in a less symmetric manner in our model, it does not seem feasible to adapt the proof in \cite{multicom1, multicomponent3} to obtain our result, even in the absence of the fusion term.
 
 Another difference between our model and the one in  \cite{multicom1, multicomponent3} is that the proof in the latter relies on the conservation of mass for each of the types of monomers. Due to the fusion term, we do not have two conserved quantities for (\ref{1 over epsilon strongfusioneq}), but only the volume is conserved. In addition, the fusion term in (\ref{strongfusioneq}), (\ref{1 over epsilon strongfusioneq}) yields a non-trivial evolution of the distribution of particles. The solutions in \cite{multicom1, multicom2, multicomponent3} concentrate along a line with the orientation fixed by the initial distribution of cluster compositions or the source term. Thus, the solutions in 
\cite{multicom1, multicom2, multicomponent3} can concentrate along different lines depending on the initial distribution of particles. On the contrary, in one of the situations considered in this paper, the solutions concentrate always near the isoperimetric line, independently of the initial data. When $\Lambda\rightarrow 0$ in (\ref{1 over epsilon strongfusioneq}), we have that the coagulation operator transports particles away from the isoperimetric line but these are transported extremely fast towards the isoperimetric region due to the fusion term.

\subsection{Notations and plan of the paper}

For $I\subset [0,\infty)^{2}$, we denote by $\textup{C}_{\textup{c}}(I)$ and $\textup{C}_{\textup{0}}(I)$ the space of continuous functions on $I$ with compact support and the space of continuous functions on $I$ which vanish at infinity, respectively, both endowed with the supremum norm. $\mathscr{M}_{+}(I)$ will denote the space of non-negative Radon measures, while $\mathscr{M}_{+,\text{b}}(I)$ will be the space of non-negative, bounded Radon measures, which we endow with the weak-$^{\ast}$ topology.
\

We make in addition the following simplifications:
\begin{itemize}
\item We use the notation $\eta:=(a,v)$. We will use interchangeably both notations for convenience. 
    \item We will use the notation $f(a,v)\der v \der a$ or $f(\eta)\der \eta$ for Radon measures, independently of the fact the measure may not be absolutely continuous with respect to the Lebesgue measure. 
    \item $M_{k,l}(f):=\int_{(0,\infty)^{2}}a^{k}v^{l}f(a, v)\der v \der a$, for some $k,l\in\mathbb{R}.$
    \item For a suitably chosen $\varphi:\mathbb{R}^{2}_{>0}\rightarrow \mathbb{R}$ and for $(a,v,a',v')\in(0,\infty)^{4}$, we will denote:
   \begin{align}
   \chi_{\varphi}(a,v,a',v')& :=\varphi(a+a',v+v')-\varphi(a,v)-\varphi(a',v'); \nonumber \\
   \langle \mathbb{K}[f],\varphi\rangle& :=\frac{1}{2}\int_{(0,\infty)^{2}}\int_{(0,\infty)^{2}} K(\eta,\eta')\chi_{\varphi} (\eta,\eta')f(\eta')f(\eta) \der \eta'\der \eta. \nonumber
   \end{align}
   \item We use $C$ to denote a generic constant which may differ from line to line and depends only on the parameters characterizing the kernels $K$ and $r$.
      \item We use the symbols $\lesssim$ and $\gtrsim$ when the inequalities hold up to a constant, i.e. $f\lesssim g$ if and only if $f\leq C g$, for some $C>0$.
\end{itemize}

The structure of the paper is as follows. In the rest of this section, we establish the setting and state the main definitions and results.

In Section \ref{epsilon goes to zero}, we prove that there exists a limit for the sequence of solutions of equation (\ref{1 over epsilon strongfusioneq}) as $\Lambda\rightarrow 0$. To this end, we first prove that the mass of solutions concentrates around the isoperimetric line. This is done by looking at the adjoint equation of (\ref{1 over epsilon strongfusioneq}). The fact that the measures take small values if we are at a positive distance from the line $\{a=c_{0}v^{\frac{2}{3}}\}$ together with the fact that we can control large values of the area $a$ suffices to prove the equicontinuity in time of solutions and conclude that a limit of the sequence exists.
We then prove that the found limit is a solution for the standard one-dimensional coagulation equation. This is since now the $a$ variable acts like $c_{0}v^{\frac{2}{3}}$ and we can omit the fusion term by testing (\ref{1 over epsilon strongfusioneq}) with functions only depending on the $v$ variable.

In Section \ref{the case n goes to infinity}, we deal with the case when $\Lambda\rightarrow\infty$ in equation (\ref{1 over epsilon strongfusioneq}). We prove that a limit exists as $\Lambda\rightarrow\infty$ and that the limit satisfies a two-dimensional coagulation equation where the interaction of particles consists of particles which attach at a contact point. The proof of this result is straightforward after obtaining suitable moment estimates for the solutions, which are independent of the value of $\Lambda$. 
\subsection{Setting and main results} \label{setting}

We work with non-negative continuous kernels on $(0,\infty)^{4}$ that, in addition to the properties already stated, i.e. (\ref{kersym1}), have the following bounds:
\begin{align}
K_{1}(v^{-\alpha}v'^{\beta}+v'^{-\alpha}v^{\beta})\leq K(a,v,a',v')\leq K_{0}(v^{-\alpha}v'^{\beta}+v'^{-\alpha}v^{\beta}), \label{lower_bound_kernel}
\end{align}
for some $K_{1},K_{0}>0$, for all $a,v,a',v'$ and for the following coefficients:
\begin{align}
  &\alpha>0 \textup{ and } \beta\in (0,1) \textup{ such that } \beta-\alpha\in(0,1). \label{alpha non neg}
  \end{align}

Notice that condition (\ref{lower_bound_kernel}) implies that the kernel has a weak dependence on the area variable, but $K$ is not necessarily independent of the area variable.

\
Since we work with physically relevant particles, i.e. the particles for which the isoperimetric inequality is satisfied, it is helpful to define the following space
\begin{align}
   \mathscr{M}^{I}_{+}(\mathbb{R}_{>0}^{2}):=\{h\in \mathscr{M}_{+}(\mathbb{R}_{>0}^{2})\text{  } | \text{  } h(\{a<c_{0}v^{\frac{2}{3}}\})=0\}. \label{isoperimetricinitial}
\end{align}
The superscript $I$ stands for isoperimetric. We endow the newly-defined space with the weak-$^{\ast}$ topology on $\mathscr{M}_{+}(\mathbb{R}_{>0}^{2})$. Similarly, we denote
\begin{align}
   \mathscr{M}^{I}_{+,\textup{b}}(\mathbb{R}_{>0}^{2}):=\{h\in \mathscr{M}_{+,\textup{b}}(\mathbb{R}_{>0}^{2})\text{  } | \text{  } h(\{a<c_{0}v^{\frac{2}{3}}\})=0\}.
\end{align}
\begin{defi}\label{definitiontimedependent}
Fix $\Lambda>0$. Let $K:(0,\infty)^{4}\rightarrow [0,\infty)$ be a continuous kernel satisfying (\ref{kersym1}), 
(\ref{lower_bound_kernel}) and (\ref{alpha non neg}). Assume the fusion kernel $r\in\textup{C}^{1}(\mathbb{R}_{>0}^{2})$ satisfies (\ref{fusion_form}) and (\ref{ode_fusion}). Let $f_{\Lambda}\in\textup{C}([0,\infty);\mathscr{M}^{I}_{+}(\mathbb{R}_{>0}^{2}))$. We say that $f_{\Lambda}$ is a solution for the weak version of the time-dependent $\Lambda$-fusion problem if, for every $T>0$,
\begin{align*}
 \sup_{t\in[0,T]}  \int_{(0,\infty)^{2}}(v^{-\alpha}+v^{\beta})f_{\Lambda}(a,v,t)\der v\der a<\infty
    \end{align*}
and, for all $\varphi\in\textup{C}^{1}_{\textup{c}}([0,\infty);\compactfundif)$ and $t\in[0,\infty)$
\begin{align} \label{weak_form_time_dependent}
&\int_{(0,\infty)^{2}}f_{\Lambda}(\eta,t)\varphi(\eta,t)\der \eta-\int_{(0,\infty)^{2}}f_{\textup{in}}(\eta)\varphi(\eta,0)\der \eta-\int_{0}^{t}\int_{(0,\infty)^{2}}f_{\Lambda}(\eta,s)\partial_{s}\varphi(\eta,s)\der \eta\der s\nonumber\\
&=\int_{0}^{t}\langle \mathbb{K}[f_{\Lambda}(s)],\varphi(s)\rangle\der s+\frac{1}{\Lambda}\int_{0}^{t}\int_{(0,\infty)^{2}}r(\eta)(c_{0}v^{\frac{2}{3}}-a)f_{\Lambda}(\eta,s)\partial_{a}\varphi(\eta,s)\der \eta\der s.
\end{align}
\end{defi}
\begin{rmk}
Functions $f_{\Lambda}$ as in Definition \ref{definitiontimedependent} exist and the methods to prove their existence are similar to the ones used to prove existence of self-similar solutions in \cite{cristian2022coagulation} (in order to derive some moment estimates in \cite{cristian2022coagulation}, ideas from the one-dimensional case in \cite{dust, ESCOBEDO} were adapted). A sketch for proving their existence will be shown in Proposition \ref{existence of solutions for the equation}.
\end{rmk}
\subsection*{The case of fast fusion}
\begin{rmk}
From now on, in order to simplify the notation, we replace $\Lambda$ by $\epsilon$ in (\ref{1 over epsilon strongfusioneq}) when we consider the case $\Lambda\rightarrow 0$ and  we replace $\Lambda$ by $\frac{1}{\epsilon}$ in (\ref{1 over epsilon strongfusioneq}) when we consider the case $\Lambda\rightarrow \infty$.
\end{rmk}
\begin{teo}\label{existencelimitandcorrectsupport}
Let $K:(0,\infty)^{4}\rightarrow [0,\infty)$ be a continuous kernel satisfying (\ref{kersym1}), 
(\ref{lower_bound_kernel}) and (\ref{alpha non neg}). Assume the fusion kernel $r\in\textup{C}^{1}(\mathbb{R}_{>0}^{2})$ satisfies (\ref{fusion_form}) and (\ref{ode_fusion}) with $\mu>0$. Assume in addition that $\int_{(0,\infty)^{2}}(v^{-\mu-3}+v^{\mu+3}+v^{\sigma(\mu+3)}+a^{\mu+3})f_{\textup{in}}(a,v)\der v\der a<\infty$. Let $T>0$. Then we can construct $f_{\epsilon}$ as in Definition \ref{definitiontimedependent}, for every $\epsilon\in(0,1)$. For this sequence, we have that there exists a constant $C(T)>0$, which is independent of $\epsilon\in(0,1)$, such that
\begin{align}\label{nicemomentestimates}
   \sup_{t\in[0,T]} \int_{(0,\infty)^{2}}(v^{-\mu-3}+v^{\mu+3}+v^{\sigma(\mu+3)}+a^{\mu+3})f_{\epsilon}(a,v,t)\der v\der a\leq C(T).
\end{align}
Moreover, there exists a subsequence (which we do not relabel) and $\overline{f}\in\textup{C}((0,T];  \mathscr{M}^{I}_{+}(\mathbb{R}_{>0}^{2}))$ such that $f_{\epsilon}(t)\rightarrow\overline{f}(t)$ as $\epsilon\rightarrow 0$ in the sense of measures, for every $t\in[\overline{\sigma},T]$ and every $\overline{\sigma}>0$.
\end{teo}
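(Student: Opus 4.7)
The plan is to construct the sequence $\{f_\epsilon\}_{\epsilon \in (0,1)}$ via the existence proposition alluded to in the remark following Definition \ref{definitiontimedependent}, derive $\epsilon$-uniform moment bounds that provide tightness at each positive time together with equicontinuity in time, and finally extract a subsequential limit by an Arzela--Ascoli argument combined with a diagonal procedure in $\overline{\sigma} \downarrow 0$.

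The three purely $v$-dependent moments in (\ref{nicemomentestimates}) are obtained by testing the weak form (\ref{weak_form_time_dependent}) with $\varphi = \varphi(v)$: since $\partial_a \varphi \equiv 0$, the $1/\epsilon$ fusion term drops out identically, and the resulting ODEs are decoupled from $\epsilon$. Using (\ref{lower_bound_kernel}) together with the subadditive inequality $(v+v')^p - v^p - v'^p \leq C_p(v^{p-1}v' + v v'^{p-1})$ for $p \geq 1$ and the dissipative bound $(v+v')^{-q} - v^{-q} - v'^{-q} \leq 0$ for $q > 0$, each of these moments satisfies a Gr\"onwall-type inequality closing on intermediate moments $M_{0,q}$ which are bounded by H\"older interpolation between $v^{-\mu-3}$ and $v^{\mu+3}$ (the mass being finite, for example, since $1 \leq v^{-\mu-3} + v^{\mu+3}$ pointwise). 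For the $a$-moment, testing with $\varphi(a,v) = a^{\mu+3}$ yields
\begin{equation*}
\frac{\der}{\der t}M_{\mu+3,0}(f_\epsilon) = \langle \mathbb{K}[f_\epsilon], a^{\mu+3}\rangle + \frac{\mu+3}{\epsilon}\int r(a,v)(c_0 v^{2/3} - a)\,a^{\mu+2} f_\epsilon\,\der\eta,
\end{equation*}
and on the support $\{a \geq c_0 v^{2/3}\}$ the fusion integral is non-positive and may simply be discarded. The Smoluchowski contribution produces mixed factors of the form $a^{\mu+2} v^{\sigma}$ which, by H\"older's inequality, are bounded by $(M_{\mu+3,0})^{(\mu+2)/(\mu+3)} (M_{0,\sigma(\mu+3)})^{1/(\mu+3)}$; this is precisely the reason $v^{\sigma(\mu+3)}$ appears in the hypothesis on the initial data.

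With (\ref{nicemomentestimates}) in hand, $\{f_\epsilon(t)\}$ is tight in $\mathscr{M}^{I}_{+}(\mathbb{R}_{>0}^{2})$ uniformly in $\epsilon$ and $t \in [\overline{\sigma}, T]$: the bounds on $v^{-\mu-3}$ and $v^{\mu+3}$ control the behaviour as $v \to 0$ and $v \to \infty$, $a^{\mu+3}$ controls large $a$, and the isoperimetric constraint $a \geq c_0 v^{2/3}$ excludes concentration at $a = 0$. For equicontinuity in time I would once more restrict to test functions $\varphi = \varphi(v)$, so that the singular $1/\epsilon$ term disappears and the remaining coagulation integral is dominated by the moment estimates uniformly in $\epsilon$; a general $\psi \in \textup{C}_{\textup{c}}(\mathbb{R}_{>0}^{2})$ is then split as $\psi(a,v) = \psi(c_0 v^{2/3}, v) + [\psi(a,v) - \psi(c_0 v^{2/3}, v)]$, the first summand being handled by the $v$-only argument and the residual by a quantitative concentration estimate of $f_\epsilon$ near the isoperimetric line. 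Standard Arzela--Ascoli applied to a countable dense family of test functions, together with the diagonal procedure as $\overline{\sigma} \downarrow 0$, produces the limit $\overline{f} \in \textup{C}((0,T]; \mathscr{M}^{I}_{+}(\mathbb{R}_{>0}^{2}))$ along a subsequence.

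The main obstacle is exactly the interplay between the $1/\epsilon$ prefactor in the fusion term and $a$-dependent test functions: this contribution cannot be discarded for a generic $\varphi(a,v)$, so one must prove in parallel that the mass of $f_\epsilon$ outside an $o(1)$-neighborhood of $\{a = c_0 v^{2/3}\}$ vanishes as $\epsilon \to 0$. Assumption (\ref{ode_fusion}) is calibrated precisely to furnish, via the adjoint equation method mentioned in the introduction, the differential inequality that drives the mass onto the isoperimetric line and thereby closes the equicontinuity argument.
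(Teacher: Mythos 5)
Your proposal follows essentially the same route as the paper: construct $f_\epsilon$ by truncation, obtain $\epsilon$-uniform moment bounds by testing with $v$-only functions (so the $1/\epsilon$ fusion term vanishes identically) and with $a^{\mu+3}$ (where the fusion term has a definite sign on $\{a\geq c_0 v^{2/3}\}$ and can be discarded), prove equicontinuity for positive times by combining a concentration estimate near the isoperimetric line --- derived from the adjoint equation under assumption (\ref{ode_fusion}) --- with the approximation of general test functions by $v$-only ones, and conclude via Arzel\`a--Ascoli. One small correction: the moment $v^{\sigma(\mu+3)}$ is forced not by the Smoluchowski term (whose mixed factors are $a^{\mu+2}v^{\beta}$ and $a^{\mu+2}v^{-\alpha}$, handled by Young's inequality against $a^{\mu+3}$ and intermediate $v$-moments) but by the $v^{\sigma}$ factor of the fusion kernel, which enters when estimating the fusion term in the concentration argument and in the extension of the admissible class of test functions.
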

\begin{rmk}
Theorem \ref{existencelimitandcorrectsupport} holds true also in the case $\mu\leq 0$ if we assume instead that $\int_{(0,\infty)^{2}}(v^{-1}+v^{2}+a)f_{\textup{in}}(a,v)\der v\der a<\infty$ (plus some additional moment bound of the form $M_{0,d}$, with $d$ depending on $\sigma$, which does not offer much qualitative information), which in turn will imply that there exists a constant $C(T)>0$, which is independent of $\epsilon\in(0,1)$, such that
\begin{align*}
   \sup_{t\in[0,T]} \int_{(0,\infty)^{2}}(v^{-1}+v^{2}+a)f_{\epsilon}(a,v,t)\der v\der a\leq C(T).
\end{align*}
The methods to prove the two cases, $\mu>0$ and $\mu\leq 0$, are similar up to minor technicalities and thus we restrict our attention to the case $\mu>0$ for simplicity of notation.
\end{rmk}
\begin{rmk}
Theorem \ref{existencelimitandcorrectsupport} says that we can construct a sequence of functions $\{f_{\epsilon}\}_{\epsilon\in(0,1)}$ such that a limit exists. However,  Theorem \ref{existencelimitandcorrectsupport} (and all the results stated below in this subsection) is valid for any sequence of functions $\{f_{\epsilon}\}_{\epsilon\in(0,1)}$ for which (\ref{nicemomentestimates}) holds. Note that uniqueness of coagulation equations is, with the exception of  some particular choices of coagulation kernels, still an open problem. Condition (\ref{nicemomentestimates}) is however a rather strong condition and an upper bound for the moments for the initial condition is a sufficient condition to construct a sequence for which (\ref{nicemomentestimates}) is true. We expect (\ref{nicemomentestimates}) to hold under weaker assumptions than the ones stated. This will be considered in a future work.

\end{rmk}
\begin{lem}\label{lemma dirac measure}
  Let $K:(0,\infty)^{4}\rightarrow [0,\infty)$ be a continuous kernel satisfying (\ref{kersym1}), 
(\ref{lower_bound_kernel}) and (\ref{alpha non neg}). Assume the fusion kernel $r\in\textup{C}^{1}(\mathbb{R}_{>0}^{2})$ satisfies (\ref{fusion_form}) and (\ref{ode_fusion}). Let $T>0$.  Let $f_{\epsilon}$ and $\overline{f}$ as in Theorem \ref{existencelimitandcorrectsupport}. Then $\overline{f}$ has the form 
\begin{align}\label{form of limit}
    \overline{f}(a,v,t)=F(v,t)\delta(a-c_{0}v^{\frac{2}{3}})
\end{align}
in the sense of measures. More precisely, there exists $F\in\textup{C}((0,T];\mathscr{M}_{+}(\mathbb{R}_{>0}))$ such that
\begin{align*}
\int_{(0,\infty)^{2}}\overline{f}(a,v,t)\varphi(a,v)\der v \der a=\int_{(0,\infty)^{2}}F(v,t)\varphi(a,v)\delta(a-c_{0}v^{\frac{2}{3}})\der a\der v,
\end{align*}
for every $\varphi\in\textup{C}_{0}(\mathbb{R}_{>0}^{2})$.
\end{lem}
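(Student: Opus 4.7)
The plan is to test the weak equation (\ref{weak_form_time_dependent}) with a nonnegative function $\varphi(a,v)$ engineered so that the $1/\epsilon$ prefactor on the fusion term forces $\overline{f}$ to give no mass to any compact set bounded away from the isoperimetric curve $\{a=c_0 v^{\frac{2}{3}}\}$. For $\psi \in \textup{C}_{\textup{c}}^{1}((0,\infty))$ nonnegative and numbers $0 < \delta_1 < \delta_2 < M$, I would pick $F \in \textup{C}_{\textup{c}}^{1}([0,\infty))$ with $F \geq 0$, $F \equiv 0$ outside $[\delta_1,M+1]$, $F$ increasing on $[\delta_1,\delta_2]$, $F$ constant on $[\delta_2,M]$, and $F$ decreasing on $[M,M+1]$. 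Setting $\varphi(a,v):= \psi(v) F(a - c_0 v^{\frac{2}{3}})$ gives $\varphi \in \textup{C}_{\textup{c}}^{1}(\mathbb{R}^{2}_{>0})$ with $\partial_a \varphi = \psi(v) F'(a - c_0 v^{\frac{2}{3}})$.

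Plugging this time-independent $\varphi$ into (\ref{weak_form_time_dependent}), the left-hand side is uniformly bounded in $\epsilon$: the mass term $\int f_\epsilon \varphi$ is controlled by the compact support of $\varphi$ together with (\ref{nicemomentestimates}), and the coagulation bracket is bounded via (\ref{lower_bound_kernel}) by products of $v^{-\alpha}$- and $v^{\beta}$-moments of $f_\epsilon$. Hence
\begin{align*}
\int_0^t\int r(a,v)(c_0 v^{\frac{2}{3}}-a)\psi(v) F'(a-c_0 v^{\frac{2}{3}}) f_\epsilon\,\der\eta\,\der s = O(\epsilon).
\end{align*}
Writing $F' = F'_+ - F'_-$ (with $F'_+$ supported in $[\delta_1,\delta_2]$ and $F'_-$ in $[M,M+1]$), the contributions $J_1^\epsilon$ from $F'_+$ and $J_2^\epsilon$ from $-F'_-$ have definite signs on $\supp f_\epsilon \subset \{a \geq c_0 v^{\frac{2}{3}}\}$: $J_1^\epsilon \leq 0$ and $J_2^\epsilon \geq 0$. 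Using (\ref{fusion_form}) and $|c_0 v^{\frac{2}{3}}-a| \leq a+1 \leq 2a$ on the support of $F'_-$, Markov's inequality combined with the $M_{\mu+3,0}(f_\epsilon)$ bound in (\ref{nicemomentestimates}) yields $0 \leq J_2^\epsilon \leq C(T,\psi)/M^{2}$, whence $0 \leq -J_1^\epsilon \leq C(T,\psi)/M^{2} + O(\epsilon)$.

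The integrand defining $-J_1^\epsilon$ is continuous and compactly supported in $\mathbb{R}^{2}_{>0}$, so the weak-$^{\ast}$ convergence $f_\epsilon(s) \to \overline{f}(s)$ from Theorem \ref{existencelimitandcorrectsupport} passes to the limit $\epsilon \to 0$ pointwise in $s$, and uniform moment bounds give dominated convergence in the $s$-integral. One obtains
\begin{align*}
0 \leq \int_0^t\int r(a,v)(a-c_0 v^{\frac{2}{3}})\psi(v) F'_+(a-c_0 v^{\frac{2}{3}})\overline{f}(s)\,\der\eta\,\der s \leq \frac{C(T,\psi)}{M^{2}}.
\end{align*}
The left-hand side does not depend on $M$, so letting $M\to\infty$ forces it to vanish. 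Since the integrand is nonnegative and strictly positive on $\{(a,v):\psi(v)>0,\;a-c_0 v^{\frac{2}{3}}\in(\delta_1,\delta_2)\}$, the measure $\overline{f}$ must vanish on that strip. Ranging $\psi$ and $(\delta_1,\delta_2)$ over countable families yields $\overline{f}(\{a>c_0 v^{\frac{2}{3}}\})=0$, and combined with $\overline{f}(t)\in\mathscr{M}^{I}_{+}(\mathbb{R}^{2}_{>0})$ this means $\overline{f}(t)$ is supported on the graph $\{a = c_0 v^{\frac{2}{3}}\}$. Defining $F(v,t)$ as the push-forward of $\overline{f}(t)$ under $(a,v)\mapsto v$ produces a Radon measure on $(0,\infty)$, and the change of variables along the graph gives (\ref{form of limit}); the continuity $F \in \textup{C}((0,T];\mathscr{M}_{+}(\mathbb{R}_{>0}))$ follows from $\overline{f} \in \textup{C}((0,T];\mathscr{M}^{I}_{+}(\mathbb{R}^{2}_{>0}))$ by localising a test function $\phi\in\textup{C}_{0}(\mathbb{R}_{>0})$ to the graph via a cutoff in $a$ chosen large enough to contain the image of $\supp\phi$ under $v\mapsto c_0 v^{\frac{2}{3}}$.

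The main technical obstacle is the ``far'' contribution $J_2^\epsilon$ that the compact-support requirement on $\varphi$ forces one to include: its smallness is extracted from the $a^{\mu+3}$-moment bound in (\ref{nicemomentestimates}), and the order of limits ($\epsilon\to 0$ before $M\to\infty$) must be respected.
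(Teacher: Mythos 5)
Your argument is essentially correct, but it takes a genuinely different route from the paper. The paper's own proof of this lemma is a one-line reduction: it passes to the limit $\epsilon\to 0$ in Proposition \ref{solutions stay close to the isoperimetric line}, which states that $\big|\int f_{\epsilon}\varphi\big|\leq\delta_{2}$ for all test functions vanishing on $\{a\leq c_{0}v^{2/3}+\delta_{1}\}$, uniformly in $t\geq\overline{\sigma}$ and for all $\epsilon\leq\epsilon_{\delta_{1},\delta_{2}}$; that proposition is in turn proved via the adjoint transport equation (Proposition \ref{continuoussemigroup 2 positive mu}), i.e.\ by solving $\partial_{t}\varphi_{\epsilon}+\frac{1}{\epsilon}\overline{r}_{\epsilon}(c_{0}v^{2/3}-a)\partial_{a}\varphi_{\epsilon}=0$ backwards along characteristics and exploiting the exponential decay of $\partial_{a}\varphi_{\epsilon}$. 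You instead test the weak formulation directly with a monotone profile of $a-c_{0}v^{2/3}$ and exploit the sign-definiteness of the fusion term on $\supp f_{\epsilon}\subset\{a\geq c_{0}v^{2/3}\}$ together with the $1/\epsilon$ prefactor — a Lyapunov-type argument that bypasses the dual-equation machinery entirely. What your approach buys is self-containedness and simplicity; what it loses is that it only controls the limit $\overline{f}$ (and only in a time-integrated sense, so you need the lower semicontinuity of $s\mapsto\overline{f}(s)(U)$ for open $U$, or the weak-$^{\ast}$ continuity in time, to upgrade from a.e.\ $s$ to every $s$ — this step deserves an explicit sentence), whereas the paper's Proposition \ref{solutions stay close to the isoperimetric line} is a quantitative statement at finite $\epsilon$ that is reused later in the equicontinuity proof and in the derivation of the one-dimensional limit equation, so it cannot be dispensed with in the overall architecture. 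Two small points to tidy up: the far-field bound on $J_{2}^{\epsilon}$ requires splitting $a^{\mu+1}v^{\sigma}$ by Young's inequality into $a^{\mu+3}$ and $v^{\sigma(\mu+3)}$ (both moments appear in (\ref{nicemomentestimates})), which yields a decay like $C/M$ rather than $C/M^{2}$ — immaterial since only $M\to\infty$ is used; and your profile function clashes notationally with the measure $F$ of the statement.
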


\begin{defi}\label{definitionlimit}
   Let $F$ be as in (\ref{form of limit}) and $f_{\textup{in}}$ as in Theorem \ref{existencelimitandcorrectsupport}. We define the following initial value
 \begin{align}\label{initial datum}
\int_{(0,\infty)}F(v,0)\varphi(v)\der v:=\int_{(0,\infty)^{2}}f_{\textup{in}}(a,v)\varphi(v)\der a\der v,
\end{align}
for every $\varphi\in\textup{C}_{0}(\mathbb{R}_{>0})$.
\begin{defi}\label{limit function}
Let $\epsilon\in(0,1)$, fix $T>0$ and let $f_{\epsilon}$ be as in Theorem \ref{existencelimitandcorrectsupport}. Define $F_{\epsilon}\in\textup{C}([0,T];\mathscr{M}_{+}(\mathbb{R}_{>0}))$ as
\begin{align}\label{one dim measures}
\int_{(0,\infty)}F_{\epsilon}(v,t)\varphi(v)\der v:=\int_{(0,\infty)^{2}}f_{\epsilon}(a,v,t)\chi_{\epsilon}(a)\varphi(v)\der v \der a,
\end{align}
for every $t\in[0,T]$ and $\varphi\in\textup{C}_{0}(\mathbb{R}_{>0})$ and where $\chi_{\epsilon}:(0,\infty)\rightarrow[0,1]$ is a continuous function such that $\chi(a)=1$ on $(0,\frac{1}{\epsilon^{2}}]$ and $\chi(a)=0$ on $[\frac{2}{\epsilon^{2}},\infty)$.
\end{defi}
\end{defi}
\begin{teo}\label{pass back to standard coagulation}
 Let $\{F_{\epsilon}\}_{\epsilon\in(0,1)}$ be as in Definition \ref{limit function} and $F$ as in (\ref{form of limit}) with initial value as in Definition \ref{definitionlimit}. We then have that 
\begin{align*}
\int_{(0,\infty)}F_{\epsilon}(v,t)\varphi(v)\der v\rightarrow \int_{(0,\infty)}F(v,t)\varphi(v)\der v,
\end{align*}
as $\epsilon\rightarrow 0$, for every $t\in[0,T]$ and every $\varphi\in\textup{C}_{0}(\mathbb{R}_{>0})$. Moreover, $F\in\textup{C}([0,T];\mathscr{M}_{+}(\mathbb{R}_{>0}))$ and $F$ satisfies the standard one-dimensional coagulation equation, namely, for every $t\in[0,T]$ and every $\varphi\in\textup{C}_{\textup{c}}(\mathbb{R}_{>0})$, the following holds
\begin{align*}
&\int_{(0,\infty)}F(v,t)\varphi(v)\der v-\int_{(0,\infty)}F(v,0)\varphi(v)\der v\nonumber\\
&=\int_{0}^{t}\int_{(0,\infty)}\int_{(0,\infty)}K(c_{0}v^{\frac{2}{3}},v,c_{0}v'^{\frac{2}{3}},v')F(v,s)F(v',s)[\varphi(v+v')-\varphi(v)-\varphi(v')]\der v'\der v\der s.
\end{align*}
\end{teo}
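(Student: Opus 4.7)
I would proceed in three steps: first, establish the weak convergence of $F_\epsilon(t)$ to $F(t)$; second, pass to the limit $\epsilon\to 0$ in a suitable weak formulation to obtain the one-dimensional coagulation equation; third, verify that $F\in \textup{C}([0,T];\mathscr{M}_+(\mathbb{R}_{>0}))$.

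For the convergence of $F_\epsilon$, given $\varphi\in \textup{C}_0(\mathbb{R}_{>0})$, I would split
\begin{align*}
\int F_\epsilon(v,t)\varphi(v)\der v = \int f_\epsilon(\eta,t)\psi_R(a)\varphi(v)\der \eta + \int f_\epsilon(\eta,t)[\chi_\epsilon(a)-\psi_R(a)]\varphi(v)\der \eta,
\end{align*}
where $\psi_R$ is a smooth cutoff with $\psi_R\equiv 1$ on $[a_0,R]$ and $\supp\psi_R\subset [a_0/2,2R]$, for some $0<a_0<c_0(\inf\supp\varphi)^{2/3}$. The remainder is bounded by $C\|\varphi\|_\infty R^{-(\mu+3)}$ via (\ref{nicemomentestimates}), while the main term converges to $\int F(v)\psi_R(c_0 v^{2/3})\varphi(v)\der v$ by Theorem \ref{existencelimitandcorrectsupport} and Lemma \ref{lemma dirac measure}, since $\psi_R\varphi\in \textup{C}_0(\mathbb{R}_{>0}^2)$. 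Sending first $\epsilon\to 0$ and then $R\to\infty$, using the bound $\int v^{2(\mu+3)/3}F(v)\der v<\infty$ inherited from (\ref{nicemomentestimates}) by lower semicontinuity, yields the desired convergence.

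For the equation, I would test (\ref{weak_form_time_dependent}) for $f_\epsilon$ with the product test function $\varphi_R(\eta)=\psi_R(a)\varphi(v)$, with $\varphi\in \textup{C}^1_{\textup{c}}(\mathbb{R}_{>0})$. The delicate term is the fusion contribution
\begin{align*}
\frac{1}{\epsilon}\int_0^t\!\!\int r(a,v)(c_0 v^{2/3}-a)\psi_R'(a)\varphi(v)f_\epsilon(\eta,s)\der \eta\der s.
\end{align*}
The contribution of $[a_0/2,a_0]$ vanishes because $\supp f_\epsilon\cap \{a<c_0v^{2/3}\}=\emptyset$, while on $[R,2R]$ I would exploit the identity obtained by testing (\ref{weak_form_time_dependent}) with a regularisation of $\varphi(\eta)=a$; since $\chi_a\equiv 0$, the coagulation term disappears and
\begin{align*}
\frac{1}{\epsilon}\int_0^T\!\!\int r(a,v)(a-c_0 v^{2/3})f_\epsilon(\eta,s)\der \eta\der s=M_{1,0}(f_{\textup{in}})-M_{1,0}(f_\epsilon(T))\leq C,
\end{align*}
uniformly in $\epsilon$ by (\ref{nicemomentestimates}); combined with $|\psi_R'|\leq C/R$, this bounds the fusion term by $C\|\varphi\|_\infty/R$. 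The time and coagulation terms converge as $\epsilon\to 0$ by the weak convergence of $f_\epsilon\otimes f_\epsilon$ to $\overline f\otimes \overline f$ on compact subsets, with (\ref{lower_bound_kernel}) providing the required integrability. Passing $\epsilon\to 0$ collapses $a,a'$ to $c_0 v^{2/3}, c_0 v'^{2/3}$ in the coagulation integrand, and then $R\to\infty$ removes the $\psi_R$ factors and eliminates the fusion term, producing the one-dimensional equation.

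Time continuity of $F$ will then be immediate from the derived integral equation, whose right-hand side is Lipschitz in $t$ with a constant depending on $\varphi$ and the moments of $F$ inherited from (\ref{nicemomentestimates}). The hardest step will be the convergence of the bilinear coagulation functional under the non-compact support structure of $\chi_{\varphi_R}$; I would handle this by splitting into gain and loss pieces, using the compactness of $\supp\varphi$ to restrict at least one of $v,v',v+v'$ to a compact subset of $(0,\infty)$, and then controlling the remaining integration via the $v^{-\alpha}$ and $v^\beta$ moments in (\ref{nicemomentestimates}), which render $\int K(\eta,\eta')f_\epsilon(\eta')\der \eta'$ locally bounded in $\eta$. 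The fusion control through the $M_{1,0}$ identity, although quantitatively elementary, is the conceptually essential ingredient enabling the asymptotic reduction.
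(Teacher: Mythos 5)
Your proposal is correct and reaches the theorem, but it is organised around two technical devices that differ from the paper's. The paper never introduces an $a$-cutoff in the test function for the limit equation: it first extends the admissible test class to functions $\tilde\varphi(v)$ depending only on $v$ (Proposition \ref{extension compact support}, which itself needs the moment bounds (\ref{moment estimates to lose compact support})), so that the fusion term vanishes identically because $\partial_a\tilde\varphi=0$; the whole difficulty is then shifted to the coagulation double integral, which is decomposed according to the distance of $(a,v)$ and $(a',v')$ from the isoperimetric line, the off-line contributions being killed by the concentration estimate of Proposition \ref{solutions stay close to the isoperimetric line} and the on-line contribution converging by continuity of $K$. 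You instead keep a compactly supported test function $\psi_R(a)\varphi(v)$ and pay for it with a fusion boundary term on $[R,2R]$, which you control by the identity obtained from $\varphi=a$: since $\chi_a\equiv0$, the quantity $\frac{1}{\epsilon}\int_0^T\!\int r(a,v)(a-c_0v^{\frac{2}{3}})f_\epsilon\,\der\eta\,\der s$ equals the decrease of $M_{1,0}$ and is bounded uniformly in $\epsilon$. This time-integrated fusion-flux bound is a genuinely different (and rather elegant) mechanism that the paper does not use, and it lets you avoid the test-function extension at this stage; note, however, that your route still relies on the concentration result implicitly, through Lemma \ref{lemma dirac measure}, to collapse $K(a,v,a',v')$ to $K(c_0v^{\frac{2}{3}},v,c_0v'^{\frac{2}{3}},v')$ in the limit. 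Likewise, your replacement of the paper's Proposition \ref{behavior near initial data} (uniform-in-$\epsilon$ equicontinuity of $F_\epsilon$ plus Arzel\`a--Ascoli) by reading off Lipschitz continuity of $t\mapsto\int F(t)\varphi$ from the limit equation is legitimate, because your weak formulation is written from $s=0$ so the initial datum survives the limit. Two points should be made explicit in a write-up: (i) the convergence $f_\epsilon(s)\to\overline f(s)$ is only available for $s\ge\overline\sigma>0$, so the time integral in the coagulation term must be split as $\int_0^{\overline\sigma}+\int_{\overline\sigma}^t$ with the first piece $O(\overline\sigma)$ uniformly in $\epsilon$ (this is exactly the paper's ``derive the equation from $\overline\sigma$, then let $\overline\sigma\to0$'' step, which your scheme absorbs but should state); (ii) the identity for $\varphi=a$ requires a concave regularisation of $\min\{a,n\}$ so that the coagulation contribution has a definite sign and Fatou closes the estimate --- you flag the regularisation, but the sign argument is what makes it work.
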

\subsection*{The case of negligible fusion}
\begin{rmk}
In order to simplify the notation, in the case when $\Lambda\rightarrow \infty$, we replace $\Lambda$ by $\frac{1}{\epsilon}$, for $\epsilon>0.$ Thus, for $T>0$ and $t\in[0,T]$, we look at the equation
\begin{align}\label{epsilonic_convergence n case}
&\int_{(0,\infty)^{2}}f_{\epsilon}(\eta,t)\varphi(\eta,t)\der \eta-\int_{(0,\infty)^{2}}f_{\textup{in}}(\eta)\varphi(\eta,0)\der \eta-\int_{0}^{t}\int_{(0,\infty)^{2}}f_{\epsilon}(\eta,s)\partial_{s}\varphi(\eta,s)\der \eta\der s\nonumber\\
&=\int_{0}^{t}\langle \mathbb{K}[f_{\epsilon}(s)],\varphi(s)\rangle\der s+\epsilon\int_{0}^{t}\int_{(0,\infty)^{2}}r(\eta)(c_{0}v^{\frac{2}{3}}-a)f_{\epsilon}(\eta,s)\partial_{a}\varphi(\eta,s)\der \eta\der s,
\end{align}
for $\epsilon\in(0,1)$, $\varphi\in\textup{C}^{1}_{0}(\mathbb{R}_{>0}^{2})$ and with $f_{\textup{in}}\in\mathscr{M}^{I}_{+}(\mathbb{R}_{>0}^{2})$.
\end{rmk}
\begin{teo}\label{case of slow fusion}
 Let $K:(0,\infty)^{4}\rightarrow [0,\infty)$ be a continuous kernel satisfying (\ref{kersym1}), 
(\ref{lower_bound_kernel}) and (\ref{alpha non neg}). Assume the fusion kernel $r\in\textup{C}^{1}(\mathbb{R}_{>0}^{2})$ satisfies (\ref{fusion_form}) and (\ref{ode_fusion}) with $\mu>0$. Assume in addition that $\int_{(0,\infty)^{2}}(v^{-\mu-2}+v^{\mu+2}+v^{\sigma(\mu+2)}+a^{\mu+2})f_{\textup{in}}(a,v)\der v\der a<\infty$. Let $T>0$. Then we can construct $f_{\epsilon}$ as in Definition \ref{definitiontimedependent} satisfying equation (\ref{epsilonic_convergence n case}), for every $\epsilon\in(0,1)$. For this sequence, we have that there exists a constant $C(T)>0$, which is independent of $\epsilon\in(0,1)$, such that
\begin{align}\label{nicemomentestimates --}
   \sup_{t\in[0,T]} \int_{(0,\infty)^{2}}(v^{-\mu-2}+v^{\mu+2}+v^{\sigma(\mu+2)}+a^{\mu+2})f_{\epsilon}(a,v,t)\der v\der a\leq C(T)
\end{align}
and that there exists a subsequence (which we do not relabel) and $\underline{f}\in\textup{C}([0,T];  \mathscr{M}^{I}_{+}(\mathbb{R}_{>0}^{2}))$ such that
\begin{align*}
\int_{(0,\infty)^{2}}f_{\epsilon}(\eta,t)\varphi(\eta)\der \eta\rightarrow \int_{(0,\infty)^{2}}\underline{f}(\eta,t)\varphi(\eta)\der \eta,
\end{align*}
as $\epsilon\rightarrow 0$, for every $t\in[0,T]$ and every $\varphi\in\textup{C}_{0}(\mathbb{R}^{2}_{>0})$. Additionally, we have that $\underline{f}$ satisfies a standard two-dimensional coagulation equation, namely, for every $t\in[0,T]$ and every $\varphi\in\textup{C}_{\textup{c}}(\mathbb{R}_{>0}^{2})$, the following holds
\begin{align}\label{two dim final}
&\int_{(0,\infty)^{2}}\underline{f}(\eta,t)\varphi(\eta)\der \eta-\int_{(0,\infty)^{2}}f_{\textup{in}}(\eta)\varphi(\eta)\der \eta\nonumber\\
&=\int_{0}^{t}\int_{(0,\infty)^{2}}\int_{(0,\infty)^{2}}K(\eta,\eta')\underline{f}(\eta,s)\underline{f}(\eta',s)[\varphi(\eta+\eta')-\varphi(\eta)-\varphi(\eta')]\der \eta'\der \eta\der s.
\end{align}
\end{teo}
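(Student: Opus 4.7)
The plan is to follow the architecture of the fast-fusion analysis in Section \ref{epsilon goes to zero}, exploiting the fact that the small parameter $\epsilon$ now multiplies the fusion term, so that this contribution vanishes in the limit rather than selecting the support of the limit measure. First I would construct the solutions $f_\epsilon$ exactly as in Proposition \ref{existence of solutions for the equation}: truncate $K$ and $r$ to a compact subset $\textup{X}_N\subset(0,\infty)^{2}$, solve the resulting $\textup{L}^{1}$ equation by a standard fixed-point argument, and pass to the limit $N\to\infty$ using the moment bounds below.

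The central step is the uniform moment estimate (\ref{nicemomentestimates --}). I would test the equation with (a regularization of) $\varphi(\eta)=v^{-\mu-2}+v^{\mu+2}+v^{\sigma(\mu+2)}+a^{\mu+2}$. The three $v$-only contributions have $\partial_a\varphi=0$, so the fusion term does not touch them and they reduce to the one-dimensional moment analysis already carried out in \cite{cristian2022coagulation}. For $\varphi=a^{\mu+2}$ the fusion contribution is
\[
\epsilon(\mu+2)\int r(\eta)(c_{0}v^{2/3}-a)a^{\mu+1}f_{\epsilon}(\eta,s)\,\der\eta\leq 0,
\]
since $a\geq c_{0}v^{2/3}$ on $\supp f_{\epsilon}$ (a property preserved by the evolution as noted below (\ref{1 over epsilon strongfusioneq})); thus this contribution is actually favourable. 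The coagulation contribution is closed via the elementary bound $(a+a')^{\mu+2}-a^{\mu+2}-a'^{\mu+2}\lesssim a\,a'^{\mu+1}+a^{\mu+1}a'$ combined with the upper bound in (\ref{lower_bound_kernel}); a Gronwall argument applied to the full vector of moments then yields (\ref{nicemomentestimates --}) uniformly in $\epsilon\in(0,1)$.

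Given the moment bound, $\{f_{\epsilon}(t)\}$ is weakly-$^{\ast}$ relatively compact in $\mathscr{M}_{+}^{I}(\mathbb{R}_{>0}^{2})$ for each $t$, because the moments rule out mass escape as $v\to 0$, $v\to\infty$ and $a\to\infty$. Equicontinuity in time is immediate from (\ref{epsilonic_convergence n case}): for any $\varphi\in\textup{C}_{\textup{c}}^{1}(\mathbb{R}_{>0}^{2})$, the increment $|\int f_{\epsilon}(\cdot,t_{2})\varphi-\int f_{\epsilon}(\cdot,t_{1})\varphi|$ is bounded by $|t_{2}-t_{1}|$ times a constant controlled by (\ref{nicemomentestimates --}) (the factor $\epsilon$ makes the fusion contribution harmless). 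An Arzel\`{a}--Ascoli argument on a countable dense family of test functions then yields, along a subsequence, a limit $\underline{f}\in\textup{C}([0,T];\mathscr{M}_{+}^{I}(\mathbb{R}_{>0}^{2}))$.

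Finally, to identify the limit equation, the fusion term is $O(\epsilon)$ thanks to the estimate
\[
\left|\epsilon\int_{0}^{t}\!\!\int r(\eta)(c_{0}v^{2/3}-a)f_{\epsilon}\partial_{a}\varphi\,\der\eta\,\der s\right|\lesssim \epsilon\|\partial_{a}\varphi\|_{\infty}\int_{0}^{t}\!\!\int\bigl(a^{\mu+1}v^{\sigma}+a^{\mu}v^{\sigma+2/3}\bigr)f_{\epsilon}\,\der\eta\,\der s,
\]
whose right-hand side is uniformly bounded via (\ref{fusion_form}) and (\ref{nicemomentestimates --}). For the coagulation term, the weak-$^{\ast}$ convergence $f_{\epsilon}(s)\otimes f_{\epsilon}(s)\rightharpoonup \underline{f}(s)\otimes\underline{f}(s)$ on compact subsets of $(0,\infty)^{4}$ combined with the uniform integrability provided by (\ref{nicemomentestimates --}) allows one to pass to the limit in $\langle\mathbb{K}[f_{\epsilon}(s)],\varphi(s)\rangle$, exactly mirroring the corresponding step in Section \ref{epsilon goes to zero}. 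The main obstacle is the usual one for weak coagulation limits: truncation of the bilinear term must be reconciled with the singular behaviour of $K$ near $v=0$ (the factor $v^{-\alpha}$ in (\ref{lower_bound_kernel})), which is precisely the reason the negative moment $v^{-\mu-2}$ appears in (\ref{nicemomentestimates --}).
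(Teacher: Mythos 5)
Your proposal is correct and follows essentially the same route as the paper: existence and the uniform moment bounds come from the truncation/fixed-point construction of Proposition \ref{existence of solutions for the equation} (with the higher $a$-moments closed exactly as you describe, using the sign of the fusion contribution on $\{a\geq c_{0}v^{\frac{2}{3}}\}$ and sublinearity of $\chi_{\varphi}$ for $\varphi=a^{\mu+2}$), equicontinuity and Arzel\`a--Ascoli give the limit $\underline{f}$, and the identification of the limit equation rests on the same $O(\epsilon)$ bound for the fusion term via $M_{\mu+2,0}+M_{0,\sigma(\mu+2)}$ together with standard weak-$^{\ast}$ passage in the bilinear coagulation term. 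The only difference is one of presentation: the paper outsources the moment and compactness steps to its Section \ref{epsilon goes to zero} and to \cite{cristian2022coagulation}, whereas you spell them out, which changes nothing of substance.
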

\begin{rmk}\label{remark conservation mass}
The function $\underline{f
}$ found in Theorem \ref{case of slow fusion} satisfies
\begin{align*}
    \int_{(0,\infty)^{2}}a\underline{f}(\eta,t)\der \eta =  \int_{(0,\infty)^{2}}af_{\textup{in}}(\eta)\der \eta \textup{ and }   \int_{(0,\infty)^{2}}v\underline{f}(\eta,t)\der \eta =  \int_{(0,\infty)^{2}}vf_{\textup{in}}(\eta)\der \eta
\end{align*}
for every $t\in[0,T].$
\end{rmk}
\section{The case of fast fusion}\label{epsilon goes to zero}
\subsection{Existence of a limit of solutions of coagulation equations with fast fusion}\label{existence of limit section}
Let $T>0$ and $t\in[0,T]$. We look at the equation
\begin{align}\label{epsilonic_convergence}
&\int_{(0,\infty)^{2}}f_{\epsilon}(\eta,t)\varphi(\eta,t)\der \eta-\int_{(0,\infty)^{2}}f_{\textup{in}}(\eta)\varphi(\eta,0)\der \eta-\int_{0}^{t}\int_{(0,\infty)^{2}}f_{\epsilon}(\eta,s)\partial_{s}\varphi(\eta,s)\der \eta\der s\nonumber\\
&=\int_{0}^{t}\langle \mathbb{K}[f_{\epsilon}(s)],\varphi(s)\rangle\der s+\frac{1}{\epsilon}\int_{0}^{t}\int_{(0,\infty)^{2}}r(\eta)(c_{0}v^{\frac{2}{3}}-a)f_{\epsilon}(\eta,s)\partial_{a}\varphi(\eta,s)\der \eta\der s,
\end{align}
for $\epsilon\in(0,1)$, $\varphi\in\textup{C}^{1}_{0}(\mathbb{R}_{>0}^{2})$ and with $f_{\textup{in}}\in\mathscr{M}^{I}_{+}(\mathbb{R}_{>0}^{2})$.

We begin by remembering the truncated functions used to prove the existence of solutions for truncated versions of coagulation equations allowing fusion of particles, which is done using a fixed point argument. More details can be found in \cite{cristian2022coagulation}.

We define $K_{R}:(0,\infty)^{4}\rightarrow[0,\infty)$ to be a continuous function such that:
 \begin{align}
     K_{R}(a,v,a',v')=\min\{K(a,v,a',v'), R\}, \label{truncationkernel}
 \end{align}
 where $K$ satisfies the upper bound in (\ref{lower_bound_kernel}) and take $\xi_{R}:\mathbb{R}_{>0}\rightarrow [0,\infty)$ to be continuous and defined in the following manner:
 \begin{align}
     \xi_{R}(v) & =0, & &\text{ when } v\geq 2R, \\
     \xi_{R}(v)& =1, & &\text { on } (0,R].
 \end{align}
Then, for $\varphi\in\textup{C}^{1}_{0}(\mathbb{R}_{>0}^{2})$, we denote by
\begin{align}\label{kernel_term}
    \langle \mathbb{K}_{R}[f],\varphi\rangle:=\frac{1}{2}\int_{(0,\infty)^{2}}\int_{(0,\infty)^{2}}K_{R}(\eta,\eta')\xi_{R}(v+v')[\varphi(\eta+\eta')-\varphi(\eta)-\varphi(\eta')]f(\der\eta)f(\der\eta').
\end{align}
For the fusion term, we use the following truncation:
\begin{align}\label{cutfusion}
r_{\delta}(a,v):= \frac{r( \eta)\max\{v^{\sigma}, L\delta\}}{v^{\sigma}(1+\delta a^{\mu})},
\end{align}
for $\delta\in(0,1)$ and some fixed $L>0$. $L$ was chosen in \cite{cristian2022coagulation}  to be
$  L:=\frac{12}{R_{0}(1-\gamma)}$,
where $R_{0}$ is as in (\ref{fusion_form}), in order to obtain existence of self-similar profiles for equation (\ref{strongfusioneq}). 

For functions $f$ satisfying $f\in\textup{C}^{1}([0,\infty);\mathscr{M}^{I}_{+}(\mathbb{R}_{>0}^{2}))$ with
\begin{align}\label{compact support truncated functions}
f\Big(\mathbb{R}^{2}_{>0}\setminus[c_{0}\overline{\epsilon}^{\frac{2}{3}},\infty)\times[\overline{\epsilon},2R),t\Big)=0
\end{align} and such that
\begin{align}\label{finalization_space}
\sup_{t\in[0,T]}\int_{(0,\infty)^{2}}(1+a)f_{\overline{\epsilon},R,\delta}(a,v,t)\der v \der a<\infty,
\end{align}
for all times $T\in[0,\infty),$ we define the space
\begin{align}\label{existencespace for f}
U_{\overline{\epsilon},R}&:= \{f\in\textup{C}^{1}([0,\infty);\mathscr{M}^{I}_{+}(\mathbb{R}_{>0}^{2})), f \textup{ satisfies } (\ref{compact support truncated functions})\textup{ and } (\ref{finalization_space}) \}.
\end{align}

\begin{prop}\label{existence of solutions for the equation}
Assume $f_{\textup{in}}\in\mathscr{M}_{+}^{I}(\mathbb{R}_{>0}^{2})$ and
\begin{align}\label{condition on initial solution for existence of solutions}
    \int_{(0,\infty)^{2}}(v^{-1}+v^{2}+a)f_{\textup{in}}(\eta)\der \eta <\infty.
\end{align}Let $T>0$ and fix $\epsilon\in(0,1)$. Then there exists an $f_{\epsilon}\in\textup{C}([0,T];\mathscr{M}^{I}_{+}(\mathbb{R}^{2}_{>0}))$ as in Definition \ref{definitiontimedependent} with
\begin{align*}
\sup_{t\in[0,T]}\int_{(0,\infty)^{2}}(v^{-1}+v^{2}+a)f_{\epsilon}(\eta,t)\der \eta \leq C(T).
\end{align*}
\end{prop}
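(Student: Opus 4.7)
The plan is to construct $f_\epsilon$ as a limit of solutions to a triply-truncated version of (\ref{epsilonic_convergence}). Introduce cutoffs $\overline{\epsilon}, \delta \in (0,1)$ and $R > 1$, and replace $K$ by the bounded kernel $K_R \xi_R(v+v')$ from (\ref{kernel_term}) and $r$ by $r_\delta$ from (\ref{cutfusion}). The truncated solutions $f_{\overline{\epsilon}, R, \delta}$ live in the space $U_{\overline{\epsilon}, R}$ defined in (\ref{existencespace for f}), where all measures are compactly supported away from $v=0$ and $v=\infty$, so that the equation can be interpreted in the strong sense.

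\textbf{Step 1 (fixed point for the truncated problem).} On $U_{\overline{\epsilon},R}$, the map sending $f$ to the solution of the linear problem obtained by freezing the quadratic coagulation term at $f$ (so that the fusion transport becomes a linear first-order operator acting on the compact interval $[c_0\overline{\epsilon}^{2/3}, \infty) \times [\overline{\epsilon}, 2R)$) is a contraction on a small time interval, since the truncated kernels are bounded and the support is compact. Iterating this, and using the trivial a priori bound $M_{0,0}(f_{\overline{\epsilon},R,\delta}(t)) \leq M_{0,0}(f_{\textup{in}})$ from the weak form, extends the solution to $[0,T]$. This step is essentially the blueprint of \cite{cristian2022coagulation}.

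\textbf{Step 2 (uniform moment estimates).} This is the heart of the argument and the place where (\ref{ode_fusion}) and (\ref{lower_bound_kernel}) enter. Testing the truncated equation with $\varphi(a,v)=v^{-1}$ exploits the convexity inequality $\chi_{v^{-1}} \leq 0$ and the fact that fusion preserves $v$, yielding $M_{0,-1}(f_{\overline{\epsilon},R,\delta}(t)) \leq M_{0,-1}(f_{\textup{in}})$. Testing with $\varphi(a,v)=v^2$ gives $\chi_{v^2}=2vv'$, and the upper bound in (\ref{lower_bound_kernel}) reduces the right-hand side to products of moments already controlled by $M_{0,-1}$ and $M_{0,1}$, whence Gronwall yields a bound for $M_{0,2}$. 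The critical estimate is for $M_{1,0}$: coagulation contributes zero because $\chi_a\equiv 0$ (area is additive), while integration by parts in $a$ converts the fusion term into $\tfrac{1}{\epsilon}\int r(a,v)(c_0v^{2/3}-a) f\,\partial_a \varphi$, and using (\ref{ode_fusion}) together with the sign $c_0v^{2/3}-a\leq 0$ on the isoperimetric support shows that this contribution is non-positive. Hence $M_{1,0}$ stays bounded uniformly in $\overline{\epsilon},R,\delta$ \emph{and} in the scaling factor $1/\epsilon$. Condition (\ref{condition on initial solution for existence of solutions}) provides the required initial values.

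\textbf{Step 3 (passing to the limit).} The uniform moment bounds make $\{f_{\overline{\epsilon},R,\delta}(t)\}$ tight in $\mathscr{M}_{+,\textup{b}}^I(\mathbb{R}_{>0}^2)$ uniformly in $t\in[0,T]$. The weak form (\ref{epsilonic_convergence}) itself provides equicontinuity of $t\mapsto \int \varphi\, f_{\overline{\epsilon},R,\delta}(t)$ for every $\varphi\in\textup{C}_{\textup{c}}^1(\mathbb{R}_{>0}^2)$, since its right-hand side is controlled by the same moments. A diagonal Arzel\`a--Ascoli argument, sending first $\overline{\epsilon}\downarrow 0$ and $R\uparrow\infty$ and then $\delta\downarrow 0$, extracts a subsequential limit $f_\epsilon\in\textup{C}([0,T];\mathscr{M}_+^I(\mathbb{R}_{>0}^2))$, and the moment bounds survive under weak-$*$ lower semicontinuity. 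Verifying (\ref{weak_form_time_dependent}) for $f_\epsilon$ reduces to passing the weak form to the limit, where uniform integrability of the quadratic coagulation integrand and of $r(a,v)(c_0v^{2/3}-a) f_\epsilon$ is supplied by the $M_{0,2}$ and $M_{1,0}$ bounds combined with the kernel growth condition (\ref{fusion_form}). The main obstacle I anticipate is a careful bookkeeping of the $1/\epsilon$ factor in the $M_{1,0}$ estimate: naively it blows up as $\epsilon\downarrow 0$, but the sign of $c_0v^{2/3}-a$ on the support, together with the monotonicity of $r$ in $a$ encoded in (\ref{ode_fusion}), cancels this divergence and yields a bound independent of $\epsilon$, as is needed for the subsequent Theorems \ref{existencelimitandcorrectsupport} and \ref{case of slow fusion}.
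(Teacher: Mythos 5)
Your proposal is correct and follows essentially the same route as the paper: truncation of both kernels, a fixed-point/characteristics construction on $U_{\overline{\epsilon},R}$, moment bounds obtained by testing with $a$, $v^{-1}$ and $v^{2}$ (using $\chi_{a}\equiv 0$, the sign of $r(\eta)(c_{0}v^{\frac{2}{3}}-a)$ on the isoperimetric support, and Gr\"onwall for $M_{0,2}$ via the uniformly bounded $M_{0,1-\alpha}$), followed by equicontinuity and Arzel\`a--Ascoli. The only cosmetic remark is that the non-positivity of the fusion contribution to $M_{1,0}$ needs only $r\geq 0$ and $a\geq c_{0}v^{\frac{2}{3}}$ on the support, not the monotonicity condition (\ref{ode_fusion}), which the paper reserves for the dual-equation estimates later on.
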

\begin{rmk}
In order to prove the next proposition, we will need that
\begin{align}\label{higher order moments}
   \sup_{t\in[0,T]} \int_{(0,\infty)^{2}}a^{\mu+3}f_{\epsilon}(\der\eta,t)\leq C(T).
\end{align}
While the estimates in (\ref{condition on initial solution for existence of solutions}) suffice for the existence of solutions as in Definition \ref{definitiontimedependent}, in order to obtain an upper bound for moments involving higher powers of the area, we need the additional assumption that
\begin{align*}
   \sup_{t\in[0,T]} \int_{(0,\infty)^{2}}(v^{-\mu-3}+v^{\mu+3})f_{\epsilon}(\der\eta,t)\leq C(T).
\end{align*}
The proof of (\ref{higher order moments}) relies on the fact that the terms of the form $a^{\mu+2}v^{\beta}$, which appear due to the form of the coagulation kernel, can be bounded by
\begin{align*}
a^{\mu+2}v^{\beta}\lesssim a^{\mu+3}+v^{\beta(\mu+3)}.
\end{align*}
For more details, we refer to \cite[Subsection 3.3]{cristian2022coagulation}.
\end{rmk}
\begin{prop}\label{solutions stay close to the isoperimetric line}
Let $T>0$. Suppose $\{f_{\epsilon}\}_{\epsilon\in(0,1)}$ is the sequence of solutions found in Proposition \ref{existence of solutions for the equation}. Assume $\mu>0$. Assume in addition that 
\begin{align}\label{upper estimates for region positive distance from iso line}
\sup_{t\in[0,T]}\int_{(0,\infty)^{2}} (a^{\mu+3}+v^{\sigma(\mu+3)})f_{\epsilon}(\der\eta,t)\leq C(T),\end{align}
for every $\epsilon\in(0,1)$. For every $\overline{\sigma}>0$ and for every $\delta_{1}, \delta_{2}\in(0,1),$ there exists $\epsilon_{\delta_{1}, \delta_{2}}\in(0,1)$, which is independent of $\overline{\sigma}$, such that
\begin{align}\label{prop small outside iso line}
\bigg|\int_{(0,\infty)^{2}}f_{\epsilon}(a,v,t)\varphi(a,v)\der v\der a\bigg|\leq \delta_{2},
\end{align}
for every $t\geq \overline{\sigma}$, for all $\epsilon\leq \epsilon_{\delta_{1}, \delta_{2}}$ and for all $\varphi\in\textup{C}_{0}(\mathbb{R}^{2}_{>0})$ with $||\varphi||_{\infty}\leq 1$ such that $\varphi(a,v)=0$, when $a\leq c_{0}v^{\frac{2}{3}}+\delta_{1}$. 
\end{prop}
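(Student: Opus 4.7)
The plan is to control the first moment of the isoperimetric defect,
\begin{equation*}
    M_\epsilon(t) := \int_{(0,\infty)^{2}} u\, f_\epsilon(a,v,t)\,\der v\,\der a,\qquad u := a - c_0 v^{2/3} \geq 0,
\end{equation*}
and then convert smallness of $M_\epsilon$ into smallness of $f_\epsilon$ on $\{u\geq \delta_1\}$ via Markov's inequality. I would test the weak formulation (\ref{epsilonic_convergence}) against a cutoff approximation of the (non-compactly supported) function $\varphi(a,v)=u$; this is legitimate because the moment bound $\sup_{[0,T]}\int a^{\mu+3}f_\epsilon<\infty$ from (\ref{upper estimates for region positive distance from iso line}) justifies passage to the limit when the cutoff is removed.

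Since $\partial_a u = 1$, the fusion term integrates by parts to produce the sign-definite dissipation $-\tfrac{1}{\epsilon}\int r(a,v)\,u\,f_\epsilon$. For the coagulation term, a direct calculation gives
\begin{equation*}
\chi_u(\eta,\eta') \;=\; c_0\bigl(v^{2/3}+v'^{2/3}-(v+v')^{2/3}\bigr)\;\in\;\bigl[0,\, c_0\min(v^{2/3},v'^{2/3})\bigr],
\end{equation*}
and, using the kernel bound (\ref{lower_bound_kernel}) together with the $v$-moments from (\ref{nicemomentestimates}) (applicable since $-\alpha,\,\beta,\,\beta+2/3,\,-\alpha+2/3\in[-\mu-3,\mu+3]$ for $\mu>0$), I obtain $|\langle\mathbb{K}[f_\epsilon],u\rangle|\leq C(T)$ uniformly in $\epsilon$. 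For the dissipation, I use $r(a,v)\geq R_0 a^\mu v^\sigma$ from (\ref{fusion_form}) and $a\geq u$ to deduce $\int r\,u\,f_\epsilon\geq R_0\int u^{\mu+1} v^\sigma f_\epsilon$. A Hölder interpolation (in which the weight $v^\sigma$ is absorbed using the moment bound on $v^{\sigma(\mu+3)}$ from (\ref{upper estimates for region positive distance from iso line}) and Jensen's inequality for the probability measure $f_\epsilon/\|f_\epsilon\|$, whose total mass is controlled by the $v^{\pm(\mu+3)}$ bounds) then yields $\int u^{\mu+1} v^\sigma f_\epsilon \geq c(T)\,M_\epsilon(t)^{\mu+1}$. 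Combining the two contributions gives the differential inequality
\begin{equation*}
    M_\epsilon'(t) \;\leq\; C(T) - \frac{c(T)}{\epsilon}\,M_\epsilon(t)^{\mu+1}.
\end{equation*}
Since $\mu+1>1$, comparison with the ODE $\dot m = C - c m^{\mu+1}/\epsilon$ yields, for any $\bar\sigma>0$, a bound of the form $M_\epsilon(t)\leq C(T,\bar\sigma)\,\epsilon^{1/\mu}$ valid for all $t\geq\bar\sigma$, independent of the initial value $M_\epsilon(0)$. Markov's inequality then concludes: for any $\varphi\in\textup{C}_0(\mathbb{R}^2_{>0})$ with $\|\varphi\|_\infty\leq 1$ vanishing on $\{u<\delta_1\}$,
\begin{equation*}
    \Bigl|\int\varphi f_\epsilon(t)\Bigr| \;\leq\; f_\epsilon(\{u\geq\delta_1\},t) \;\leq\; \frac{M_\epsilon(t)}{\delta_1} \;\leq\; \frac{C(T,\bar\sigma)\,\epsilon^{1/\mu}}{\delta_1},
\end{equation*}
and choosing $\epsilon_{\delta_1,\delta_2}$ sufficiently small makes this bound $\leq\delta_2$.

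The main obstacle is the nonlinear interpolation $\int u^{\mu+1}v^\sigma f_\epsilon \gtrsim M_\epsilon^{\mu+1}$: the power $\mu+1$ emerges naturally from Jensen, but treating the $v^\sigma$ weight requires distinguishing cases by the sign of $\sigma$ and carefully matching exponents against the available moments $v^{-\mu-3}$, $v^{\mu+3}$, and $v^{\sigma(\mu+3)}$. This is precisely the reason why the hypothesis (\ref{upper estimates for region positive distance from iso line}) includes the non-obvious moment $v^{\sigma(\mu+3)}$. A secondary technical point is the truncation needed to test against the unbounded function $u=a-c_0 v^{2/3}$; this is standard given the control on $a^{\mu+3}$. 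The conditions (\ref{ode_fusion}) play a supporting role here, ensuring that the fusion kernel is well-behaved enough that the dissipation structure described above is preserved under the integration-by-parts manipulations.
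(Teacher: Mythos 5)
Your route is genuinely different from the paper's. The paper proves this proposition by solving the adjoint transport equation $\partial_{t}\varphi_{\epsilon}+\frac{1}{\epsilon}\overline{r}_{\epsilon}(c_{0}v^{2/3}-a)\partial_{a}\varphi_{\epsilon}=0$ backwards from the terminal datum $\Phi$ (Proposition \ref{continuoussemigroup 2 positive mu}), showing that for small $\epsilon$ the support of $\varphi_{\epsilon}(t_{1})$ is pushed into $\{a\geq M\}$ and that $\partial_{a}\varphi_{\epsilon}$ decays like $\textup{e}^{-C(T-t)/\epsilon}$ (this is where the structural hypotheses (\ref{ode_fusion}) are used), and then testing the weak formulation with $\varphi_{\epsilon}$ so that the fusion term essentially cancels. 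Your Lyapunov-functional argument for $M_{\epsilon}(t)=\int u\,f_{\epsilon}$ with $u=a-c_{0}v^{2/3}$, followed by Markov's inequality, is more elementary: the computation of $\chi_{u}$, the sign of the fusion contribution, and the instantaneous-regularization ODE comparison are all correct, it bypasses (\ref{ode_fusion}) entirely (only the lower bound in (\ref{fusion_form}) is used), and it even yields a quantitative rate $M_{\epsilon}(t)\lesssim(\epsilon/\overline{\sigma})^{1/\mu}$ that the paper does not state. (One caveat shared with the paper: your admissible $\epsilon$-threshold does in fact depend on $\overline{\sigma}$, through the requirement that the $O(\epsilon)$ transient time be shorter than $\overline{\sigma}$.)

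The genuine gap is the interpolation $\int u^{\mu+1}v^{\sigma}f_{\epsilon}\geq c\,M_{\epsilon}^{\mu+1}$. The natural H\"older/Jensen route gives
\begin{equation*}
\int u\,f_{\epsilon}\;\leq\;\Bigl(\int u^{\mu+1}v^{\sigma}f_{\epsilon}\Bigr)^{\frac{1}{\mu+1}}\Bigl(\int v^{-\sigma/\mu}f_{\epsilon}\Bigr)^{\frac{\mu}{\mu+1}},
\end{equation*}
so you need $\sup_{t}M_{0,-\sigma/\mu}(f_{\epsilon}(t))\leq C(T)$. The exponent $-\sigma/\mu$ lies in the range spanned by the available moments $v^{-\mu-3}$, $v^{\mu+3}$, $v^{\sigma(\mu+3)}$ only when $|\sigma|\leq\mu(\mu+3)$; for, say, $\mu=0.1$ and $\sigma=1$ you would need $M_{0,-10}$ while the hypotheses only control $M_{0,p}$ for $p\geq-3.1$, so the inequality as written is not available for all admissible $(\mu,\sigma)$. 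This is repairable: restrict the integral to $v\in[\tfrac{1}{V},V]$, bound the discarded tails by $CV^{-\kappa}$ using $u\leq a$ and the moments in (\ref{upper estimates for region positive distance from iso line}), apply H\"older on the compact range where $v^{\sigma}\geq V^{-|\sigma|}$, and optimize $V$ against $M_{\epsilon}$. But the outcome is a dissipation of the form $\frac{c}{\epsilon}M_{\epsilon}^{\mu+1+|\sigma|/\kappa}$ rather than $\frac{c}{\epsilon}M_{\epsilon}^{\mu+1}$, so the differential inequality and the exponent $\epsilon^{1/\mu}$ you state must be adjusted accordingly (the qualitative conclusion, and hence the proposition, survives since any positive power works in the comparison argument). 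As written, this step is asserted rather than proved, and it is exactly the point where the argument can fail without the extra case analysis you allude to.
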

\begin{prop}[Equicontinuity for positive times]\label{existence of limit for f epsilon}
Let $\overline{\sigma}>0$. Suppose $\{f_{\epsilon}\}_{\epsilon\in(0,1)}$ is a sequence of solutions as in Proposition \ref{solutions stay close to the isoperimetric line}. Let $\varphi\in\textup{C}_{0}(\mathbb{R}^{2}_{>0})$ with $||\varphi||_{\infty}\leq 1$. For every $\overline{\epsilon}\in(0,1)$, there exists $\overline{\delta}\in(0,\overline{\sigma})$ such that 
\begin{align*}
\big|\int_{(0,\infty)^{2}}\varphi(\eta)f_{\epsilon}(\der \eta,t)-\int_{(0,\infty)^{2}}\varphi(\eta)f_{\epsilon}(\der \eta,s)\big|\leq \overline{\epsilon},
\end{align*}
for all $t,s\geq\overline{\sigma}$ such that $|t-s|\leq \overline{\delta}$ and for all $\epsilon$ sufficiently small.
\end{prop}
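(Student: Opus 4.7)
My proof plan hinges on the observation that any test function depending only on $v$ annihilates the $\frac{1}{\epsilon}$-fusion term in (\ref{epsilonic_convergence}), since $\partial_{a}$ of such a function vanishes. I split
\[
\varphi(a,v) = \psi(v) + \big[\varphi(a,v) - \psi(v)\big], \qquad \psi(v) := \varphi\big(c_{0} v^{\frac{2}{3}}, v\big),
\]
so that the remainder $\varphi - \psi$ vanishes identically on the isoperimetric line. A standard density argument, combined with the uniform total-mass bound implied by (\ref{nicemomentestimates}), lets me assume from the outset that $\varphi \in \compactfundif$, in which case $\psi \in \textup{C}_{\textup{c}}^{1}(\mathbb{R}_{>0})$.

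To use $\psi$ in (\ref{epsilonic_convergence}) despite its lack of compact support in $a$, I test with $\varphi_{n}(a,v) := \psi(v)\chi_{n}(a)$, where $\chi_{n}$ is a smooth cutoff equal to $1$ on $[0,n]$, supported in $[0,2n]$, with $|\chi_{n}'| \le C/n$. The fusion contribution becomes
\[
\frac{1}{\epsilon}\int_{s}^{t}\int_{(0,\infty)^{2}} r(\eta)\big(c_{0} v^{\frac{2}{3}}-a\big)\psi(v)\chi_{n}'(a)\, f_{\epsilon}(\eta,\tau)\, \der\eta\, \der\tau,
\]
which, using $r \le R_{1}a^{\mu}v^{\sigma}$, compactness of $\supp(\psi)$ in $v$, and $\int_{a \ge n} a^{\mu+1} f_{\epsilon}(\tau)\, \der\eta \le n^{-2}\sup_{\tau}\!\int a^{\mu+3} f_{\epsilon}\, \der\eta \le C(T) n^{-2}$ from (\ref{upper estimates for region positive distance from iso line}), is of order $\epsilon^{-1} n^{-3}$; hence, for each fixed $\epsilon$, it vanishes as $n \to \infty$. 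The other terms pass to the limit by dominated convergence, yielding
\[
\int \psi(v)\, f_{\epsilon}(\der\eta,t) - \int \psi(v)\, f_{\epsilon}(\der\eta,s) = \int_{s}^{t}\langle \mathbb{K}[f_{\epsilon}(\tau)], \psi\rangle\, \der\tau,
\]
which is free of the $\frac{1}{\epsilon}$ factor. Using (\ref{lower_bound_kernel}), $|\chi_{\psi}| \le 3\|\varphi\|_{\infty}$, and the boundedness of $M_{0,-\alpha}$ and $M_{0,\beta}$ (guaranteed by the bounds from Proposition \ref{existence of solutions for the equation} since $\alpha, \beta < 1$), the right-hand side is bounded by $C(T)|t-s|$ uniformly in $\epsilon$.

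For the remainder $\varphi - \psi$ I do not use the equation, and bound $\big|\int (\varphi-\psi) f_{\epsilon}(\tau,\cdot)\, \der\eta\big|$ at each $\tau \ge \overline{\sigma}$ directly. I split the integration into the $\delta_{1}$-neighborhood $\{a - c_{0} v^{\frac{2}{3}} \le \delta_{1}\}$ of the isoperimetric line and its complement. On the near region, uniform continuity of $\varphi$ gives $|\varphi(a,v) - \psi(v)| \le \omega_{\varphi}(\delta_{1})$, and the contribution is $\le C\omega_{\varphi}(\delta_{1})$; on the far region, Proposition \ref{solutions stay close to the isoperimetric line}, applied to a smooth cutoff separating the two regions, bounds the mass of $f_{\epsilon}(\tau)$ by $\delta_{2}$ once $\epsilon \le \epsilon_{\delta_{1},\delta_{2}}$, so the contribution is $\le 2\|\varphi\|_{\infty}\delta_{2}$. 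Given $\overline{\epsilon}$, I choose $\delta_{1}$ so that $C\omega_{\varphi}(\delta_{1}) \le \overline{\epsilon}/8$, then set $\delta_{2} := \overline{\epsilon}/16$, obtain $\epsilon_{\delta_{1},\delta_{2}}$ via the proposition, and finally take $\overline{\delta} := \overline{\epsilon}/(2C(T))$. Combining the two parts delivers the conclusion for $\epsilon \le \epsilon_{\delta_{1},\delta_{2}}$ and $|t-s| \le \overline{\delta}$ with $t,s \ge \overline{\sigma}$.

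The main technical difficulty is the justification of using $\psi$ as a test function in the weak equation: because of the $\frac{1}{\epsilon}$ in front of the fusion term, the cutoff error from $\chi_{n}'$ is not small uniformly in $\epsilon$, only for each fixed $\epsilon$ as $n \to \infty$. This is harmless here, since the resulting identity is itself $\epsilon$-independent, but it is precisely where the higher-moment control $\int a^{\mu+3} f_{\epsilon}\, \der\eta \le C(T)$ from (\ref{upper estimates for region positive distance from iso line}) becomes essential.
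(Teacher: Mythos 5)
Your proof is correct and follows essentially the same strategy as the paper's: approximate $\varphi$ near the isoperimetric line by a function of $v$ alone (you take the explicit trace $\psi(v)=\varphi(c_{0}v^{\frac{2}{3}},v)$ where the paper uses an abstract $\overline{\varphi}(v)$ with $|\varphi-\overline{\varphi}|\leq\delta_{3}$ on a compact set), test the weak equation with it so that the $\frac{1}{\epsilon}$-fusion term drops out and the coagulation term alone yields the $C(T)|t-s|$ modulus, and control the remainder by Proposition \ref{solutions stay close to the isoperimetric line} away from the line and by uniform continuity near it. Your explicit cutoff-in-$a$ argument with $\chi_{n}$ plays the role of the paper's appeal to Proposition \ref{extension compact support} and is equally valid.
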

\begin{proof}[Proof of Proposition \ref{existence of solutions for the equation}]
We study the following truncated version of equation (\ref{epsilonic_convergence}):
\begin{align}\label{truncated epislonic convergence}
\partial_{t}\int_{(0,\infty)^{2}}f_{\epsilon,R}(\eta,t)\varphi(\eta)\der \eta=\langle \mathbb{K}_{R}[f_{\epsilon,R}(t)],\varphi\rangle+\frac{1}{\epsilon}\int_{(0,\infty)^{2}}\tilde{r}(\eta)(c_{0}v^{\frac{2}{3}}-a)f_{\epsilon,R}(\eta,t)\partial_{a}\varphi(\eta)\der \eta ,
\end{align}
with
\begin{align*}
\tilde{r}(\eta)=
\begin{cases}
r_{\delta}(\eta), & \mu>0, \\
r(\eta), &\mu\leq 0,
\end{cases}
\end{align*}
where $r_{\delta}$ is as in (\ref{cutfusion}) and $\mathbb{K}_{R}$ as in (\ref{kernel_term}).

We prove existence and uniqueness of $f_{\epsilon,R}$ in the space $U_{\tilde{\epsilon},R}$ defined as in (\ref{existencespace for f}), for some $\tilde{\epsilon}\in(0,1).$  We keep the notation $f_{\epsilon,R}$ for convenience. Notice however that $f_{\epsilon,R}$ depends on $\epsilon,R,\tilde{\epsilon},\delta$ when $\mu>0$ and on $\epsilon,R,\tilde{\epsilon}$ when $\mu\leq 0$.

We look at the system of characteristic equations:
\begin{equation}
\left\{\begin{aligned}
\partial_{t}A(a_{0},v_{0},t)&=\frac{1}{\epsilon}\tilde{r}(A,V)(c_{0}V^{\frac{2}{3}}-A), & A(a_{0},v_{0},0)&=a_{0}; \\
 \partial_{t}V(a_{0},v_{0},t)&=0, & V(a_{0},v_{0},0)&=v_{0};   \\
\partial_{t}c(a_{0},v_{0},t)&=\frac{1}{\epsilon}\partial_{A}[\tilde{r}(A,V)(A-c_{0}V^{\frac{2}{3}})]c(a_{0},v_{0},t),& c(a_{0},v_{0},0)&=1.
   \end{aligned}\right.
   \end{equation}
   \begin{rmk}\label{remark notation ode}
   We remark that $V(a_{0},v_{0},t)\equiv v_{0}.$ Fix $t\geq 0$, we denote the pair $(A(a_{0},v_{0},t),$ $V(a_{0},v_{0},t))$ $=:$ $ \phi_{t,\epsilon}(a_{0},v_{0})$. We fix $v_{0}$, we define $x_{t,v_{0}}^{\epsilon}:\mathbb{R}_{>0}\rightarrow\mathbb{R}$ by $x_{t,v_{0}}^{\epsilon}(a_{0}):=A(a_{0},v_{0},t)$.
\end{rmk}

We first prove the existence and uniqueness of functions $\overline{F}_{\epsilon,R}\in\textup{C}([0,T];\mathscr{M}^{I}_{+,\textup{b}}(\mathbb{R}_{>0}^{2}))$ satisfying
\begin{align*}
   \partial_{t}\int_{\mathbb{R}^{2}_{>0}} & \overline{F}_{\epsilon,R}(\eta,t)\varphi(\eta)\der \eta =\frac{1}{2}\int_{\mathbb{R}^{2}_{>0}}\int_{\mathbb{R}^{2}_{>0}}K_{R}(x_{t,V}^{\epsilon}(A),V,x_{t,V'}^{\epsilon}(A'),V')\xi_{R}(V+V')\overline{F}_{\epsilon,R}(A',V',t)\\
  &\overline{F}_{\epsilon,R}(A,V,t)[\varphi(\phi_{t,\epsilon}^{-1}(\phi_{t,\epsilon}(A,V)+\phi_{t,\epsilon}(A',V')))-\varphi(A,V)-\varphi(A',V')]\der V' \der A' \der V\der A,
\end{align*}
for every $\varphi\in\compactfun$. For this, we repeat the arguments used in \cite[Proposition 3.1]{cristian2022coagulation}. 

We then define $f_{\epsilon,R}\in\textup{C}^{1}([0,\infty);\mathscr{M}^{I}_{+}(\mathbb{R}_{>0}^{2}))$ as
\begin{align}
      \int_{(0,\infty)^{2}}f_{\epsilon,R}(a,v,t)\varphi(a,v)\der \eta=\int_{(0,\infty)^{2}}\overline{F}_{\epsilon,R}(a,v,t)\varphi(\phi_{t,\epsilon}(a,v))\der \eta,
\end{align}
for every $\varphi\in\vanishfun$. Notice that the functions $f_{\epsilon,R}$ defined in this manner will satisfy equation (\ref{truncated epislonic convergence}). For more details, see \cite[Proposition 3.1]{cristian2022coagulation}.

We are now left to prove uniform estimates for $f_{\epsilon,R}$ in order to finish the proof. Due to the choice of the space $U_{\tilde{\epsilon},R},$ we can test (\ref{truncated epislonic convergence}) with $\varphi(a,v)=a$ and $\varphi(a,v)=v^{d},$ with $d\in\mathbb{R}$. When $d\leq 1,$ we obtain:
\begin{align*}
    \int_{(0,\infty)^{2}}(v^{d}+a)f_{\epsilon,R}(\eta,t)\der \eta \leq  \int_{(0,\infty)^{2}}(v^{d}+a)f_{\textup{in}}(\eta)\der \eta.
\end{align*}
For $d=2$, since $M_{0,1-\alpha}(f_{\epsilon,R})$ is now uniformly bounded, we have
\begin{align*}
    \int_{(0,\infty)^{2}}v^{2}f_{\epsilon,R}(\eta,t)\der \eta &\leq  \int_{(0,\infty)^{2}}v^{2}f_{\textup{in}}(\eta)\der \eta+C\int_{0}^{t}M_{0,1-\alpha}(f_{\epsilon,R}(s))M_{0,1+\beta}(f_{\epsilon,R}(s))\der s\\
    &\leq \int_{(0,\infty)^{2}}v^{2}f_{\textup{in}}(\eta)\der \eta+C\int_{0}^{t}M_{0,2}(f_{\epsilon,R}(s))\der s+C.
    \end{align*}
Thus, there exists $C(t)>0$ such that 
\begin{align*}
    \int_{(0,\infty)^{2}}v^{2}f_{\epsilon,R}(\eta,t)\der \eta &\leq C(t) \int_{(0,\infty)^{2}}v^{2}f_{\textup{in}}(\eta)\der \eta.
\end{align*}

\subsubsection*{Equicontinuity}
Fix $\varphi\in\compactfundif$. Let $s,t\in[0,T]$. Assume without loss of generality $s\leq t$. Then
\begin{align}\label{equicontinuity}
    &\bigg|\int_{(0,\infty)^{2}}[f_{\epsilon,R}(\eta,t)-f_{\epsilon,R}(\eta,s)]\varphi(\eta)\der \eta\bigg|\leq\nonumber\\
    &\lesssim ||\varphi||_{\infty}\int_{s}^{t}M_{0,-\alpha}(f_{\epsilon,R}(z))M_{0,\beta}(f_{\epsilon,R}(z))\der z+\frac{1}{\epsilon}\int_{s}^{t}\int_{(0,\infty)^{2}}a^{\mu}v^{\sigma}(a+c_{0}v^{\frac{2}{3}})f_{\epsilon,R}(\eta,z)|\partial_{a}\varphi(\eta)|\der \eta\der z\nonumber\\
    &\leq ||\varphi||_{\infty}\int_{s}^{t}M_{0,-\alpha}(f_{\epsilon,R}(z))M_{0,\beta}(f_{\epsilon,R}(z))\der z+C||\partial_{a}\varphi||_{\infty}\int_{s}^{t}\int_{(0,\infty)^{2}}f_{\epsilon,R}(\eta,z)\der \eta\der z\nonumber\\
    &\leq C|t-s|.
\end{align}

Using the fact that $f_{\epsilon,R}\in U_{\tilde{\epsilon},R},$ we can extend (\ref{equicontinuity}) to hold for functions $\varphi\in\compactfun$ and then for all $\varphi\in\vanishfun$. For details, see \cite[Proposition 3.18]{cristian2022coagulation}.

Combining the found equicontinuity in (\ref{equicontinuity}) with the uniform moment estimates, which are independent of $\tilde{\epsilon}, R$ and $\delta$, we conclude using Arzelà–Ascoli theorem that there exists a subsequence of $\{f_{\epsilon,R}\}$, which we do not relabel, and an $f_{\epsilon}\in\textup{C}([0,T];\mathscr{M}^{I}_{+}(\mathbb{R}_{>0}^{2})),$ such that $f_{\epsilon,R}(t)$ converge to $f_{\epsilon}(t)$ in the weak-$^{\ast}$ topology as $\tilde{\epsilon}\rightarrow 0$, $R\rightarrow\infty$ and $\delta\rightarrow 0$, for every $t\in[0,T].$  

Thus, we can use standard arguments found in the study of coagulation equations in order to pass to the limit as $\tilde{\epsilon}\rightarrow 0$ and $R\rightarrow\infty$ (and $\delta\rightarrow 0$ if $\mu>0$) in (\ref{truncated epislonic convergence}).
\end{proof}

\begin{prop}\label{prop ode}
Let $\mu>0$. We look at the system
\begin{align*}
\partial_{t}x_{A,V}^{\epsilon}(t)=\frac{1}{\epsilon}r(x_{A,V}^{\epsilon}(t),V)(c_{0}V^{\frac{2}{3}}-x_{A,V}^{\epsilon}(t)), & &x_{A,V}^{\epsilon}(0)=A.
 \end{align*}
 For any $(A,V)\in(0,\infty)^{2}$, fixed, and $t>0$, we have that $\lim_{\epsilon\rightarrow 0}x^{\epsilon}_{A,V}(t)=c_{0}V^{\frac{2}{3}}.$ The statement holds true for compact sets $\textup{K}$ of the form $(A,V,t)\in \textup{K}\subset (0,\infty)^{3}$. 
\end{prop}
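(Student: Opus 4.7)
The plan is to remove the $1/\epsilon$ factor by rescaling time, which reduces the proposition to the long-time behavior of an $\epsilon$-independent autonomous ODE. Define $y_{A,V}(\tau):=x_{A,V}^{\epsilon}(\epsilon\tau)$. A direct calculation shows that $y_{A,V}$ satisfies
\begin{align*}
\partial_{\tau}y_{A,V}(\tau)=r(y_{A,V}(\tau),V)(c_{0}V^{\frac{2}{3}}-y_{A,V}(\tau)),\qquad y_{A,V}(0)=A,
\end{align*}
independently of $\epsilon$, so $x_{A,V}^{\epsilon}(t)=y_{A,V}(t/\epsilon)$, and it suffices to prove $y_{A,V}(\tau)\to c_{0}V^{\frac{2}{3}}$ as $\tau\to\infty$. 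Letting $\epsilon\to 0$ then gives $t/\epsilon\to\infty$ for every fixed $t>0$ and the pointwise claim follows.

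First I would observe that $c_{0}V^{\frac{2}{3}}$ is a stationary point of this ODE and, since $r>0$ by (\ref{fusion_form}), the sign of $\partial_{\tau}y_{A,V}$ is opposite to that of $y_{A,V}-c_{0}V^{\frac{2}{3}}$. Hence $y_{A,V}(\tau)$ is monotone in $\tau$ and stays trapped in the closed interval with endpoints $A$ and $c_{0}V^{\frac{2}{3}}$, and therefore has a limit $y_{\infty}$ in that interval. If $y_{\infty}\neq c_{0}V^{\frac{2}{3}}$, the continuity of $r$ together with the bound $r\geq R_{0}a^{\mu}v^{\sigma}$ makes the right-hand side of the ODE bounded away from zero for all large $\tau$, contradicting convergence; hence $y_{\infty}=c_{0}V^{\frac{2}{3}}$.

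For the uniform statement on a compact set $\textup{K}\subset(0,\infty)^{3}$, let $\pi(\textup{K})$ be the projection of $\textup{K}$ onto the $(A,V)$-variables and $t_{0}:=\min\{t:(A,V,t)\in\textup{K}\}>0$. The trajectory $y_{A,V}(\tau)$ takes values in a compact subset of $(0,\infty)^{2}$ determined solely by $\pi(\textup{K})$, on which $r$ admits a uniform positive lower bound $r_{\min}>0$. Multiplying the ODE by $y_{A,V}-c_{0}V^{\frac{2}{3}}$ and integrating gives
\begin{align*}
|y_{A,V}(\tau)-c_{0}V^{\frac{2}{3}}|\leq |A-c_{0}V^{\frac{2}{3}}|\,e^{-r_{\min}\tau},
\end{align*}
uniformly in $(A,V)\in\pi(\textup{K})$. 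Evaluating at $\tau=t/\epsilon\geq t_{0}/\epsilon$ and observing that the right-hand side tends to $0$ uniformly as $\epsilon\to 0$ finishes the proof.

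I do not expect a significant obstacle: the time rescaling is the only conceptual step, after which everything reduces to the fact that a monotone trajectory of an autonomous ODE must converge to an equilibrium, and to the fact that on compact subsets one actually gets exponential convergence via the Gr\"onwall-type estimate above.
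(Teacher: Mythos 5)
Your proof is correct and follows the same core strategy as the paper: rescale time by $\tau=t/\epsilon$ to remove the $1/\epsilon$ factor, observe that the resulting autonomous ODE has $c_{0}V^{\frac{2}{3}}$ as its unique equilibrium with the right-hand side having sign opposite to $y-c_{0}V^{\frac{2}{3}}$ (since $r>0$ by (\ref{fusion_form})), and conclude that the trajectory converges to it as $\tau\to\infty$. Where you go beyond the paper is in the compact-set part: the paper's proof stops at the qualitative sign analysis and simply asserts that the statement extends to compact sets $\textup{K}\subset(0,\infty)^{3}$, whereas you derive the quantitative bound $|y_{A,V}(\tau)-c_{0}V^{\frac{2}{3}}|\leq|A-c_{0}V^{\frac{2}{3}}|\,e^{-r_{\min}\tau}$ with $r_{\min}>0$ uniform over the (compact) range of the trapped trajectories, which genuinely proves the uniform convergence since $\tau=t/\epsilon\geq t_{0}/\epsilon\to\infty$ with $t_{0}=\min\{t:(A,V,t)\in\textup{K}\}>0$. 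This Gr\"onwall-type estimate is a modest but real improvement in completeness over the paper's argument; the only thing worth making explicit is that uniqueness for the ODE (available since $r\in\textup{C}^{1}$) is what prevents the trajectory from crossing the equilibrium, so that it stays trapped between $A$ and $c_{0}V^{\frac{2}{3}}$ and the lower bound $r_{\min}$ applies along the whole orbit.
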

\begin{proof}
Define $\tilde{x}_{A,V}(t):=x_{A,V}^{\epsilon}(\epsilon t)$. Then $\tilde{x}_{A,V}(0)=A$ and
\begin{align}\label{equation}
\partial_{t}\tilde{x}_{A,V}(t)=\partial_{t}x_{A,V}^{\epsilon}(\epsilon t)=\epsilon\partial_{\epsilon t}x_{A,V}^{\epsilon}(\epsilon t)=r(\tilde{x}_{A,V},V)(c_{0}V^{\frac{2}{3}}-\tilde{x}_{A,V})=:f(\tilde{x}_{A,V}(t)).
 \end{align}
 This implies that $\lim_{\epsilon\rightarrow 0}x^{\epsilon}_{A,V}(t)=\lim_{\epsilon\rightarrow 0}
 \tilde{x}_{A,V}(\frac{t}{\epsilon})=\lim_{t\rightarrow\infty}\tilde{x}_{A,V}(t).$
 In (\ref{equation}) notice that
 \begin{align*}
 \begin{cases}
 \tilde{x}_{A,V}(t)=c_{0}V^{\frac{2}{3}}\Rightarrow f(\tilde{x}_{A,V}(t))=0,\\
 \tilde{x}_{A,V}(t)<c_{0}V^{\frac{2}{3}}\Rightarrow f(\tilde{x}_{A,V}(t))>0,\\
 \tilde{x}_{A,V}(t)>c_{0}V^{\frac{2}{3}}\Rightarrow f(\tilde{x}_{A,V}(t))<0.
 \end{cases}
 \end{align*}  
 This implies that $\lim_{t\rightarrow\infty}\tilde{x}_{A,V}(t)=c_{0}V^{\frac{2}{3}}.$
\end{proof}
Notice that, at least for small times, the coagulation term in (\ref{weak_form_time_dependent}) gives a small contribution when we are away from the line $\{a=c_{0}v^{\frac{2}{3}}\}$. We thus look to control the contribution coming from the fusion term. For this, we first look at a simplified form for the adjoint problem of (\ref{weak_form_time_dependent}).

For simplicity, we denote by
\begin{align}\label{setisoperimetric}
\textup{S}:=\{(a,v)\in\mathbb{R}^{2}_{>0},a\geq c_{0}v^{\frac{2}{3}}\}.
\end{align}

\begin{prop}[Dual equation, case $\mu> 0$]\label{continuoussemigroup 2 positive mu}
Let $T>0$. Let $\epsilon\in(0,1)$, $R>1$ and $\mu>0$. Let $T>0$.
Let $\chi(\eta)$ be an arbitrary function in $\textup{C}_{\textup{b}}^{1}(\textup{S})$, where $\textup{S}$ is as in (\ref{setisoperimetric}), such that $\chi(\eta)=0, \textup{ when } v\not\in[\frac{1}{R},R]$. Then there exists a solution $\varphi_{\epsilon}\in \textup{W}_{T}$, with $\varphi_{\epsilon}(T,\cdot)=\chi(\cdot)$, which solves the following equation: 
\begin{align}\label{dual equation function}
\partial_{t}\varphi_{\epsilon}(t,\eta)+\frac{1}{\epsilon}\overline{r}_{\epsilon}(a,v)(c_{0}v^{\frac{2}{3}}-a)\partial_{a}\varphi_{\epsilon}(t,\eta)=0,
\end{align}
where
\begin{align*}
    \textup{W}_{T}:=\{\varphi\in\textup{C}^{1}([0,T],\textup{C}_{\textup{b}}^{1}(\textup{S}))|\varphi(t,\eta)=0, \textup{ when } v\not\in[\frac{1}{R},R], \textup{ for every } t\in[0,T]\}
\end{align*}
and
\begin{align*}
\overline{r}_{\epsilon}(a,v)=\frac{r(a,v)}{1+\epsilon^{2\mu+2}a^{\mu}}.
\end{align*}
 Assume in addition that there exists $\delta_{1}>0$ such that $\chi(\eta)=0$ when $a\leq c_{0}v^{\frac{2}{3}}+\delta_{1}$. Then the following statements hold:
\begin{enumerate}
    \item For every $M>1$ and $\overline{\sigma}>0$, there exists $\epsilon_{M,\overline{\sigma},\delta_{1}}\in(0,1)$ such that for every $\epsilon\leq \epsilon_{M,\overline{\sigma},\delta_{1}}$, we have that $\supp\varphi_{\epsilon}(t)\subseteq\{(a,v)\in(0,\infty)^{2}|a\geq M\}$, for every $t\in[0,T-\overline{\sigma}]$.
    \item There exists a constant $C(T),$ which can depend on time, but is independent on $\epsilon$, such that $\sup_{t\in[0,T]}\big(||\varphi_{\epsilon}(t)||_{\infty}+||\partial_{a}\varphi_{\epsilon}||_{\infty}\big)\leq C(T).$ Moreover, the constant $C(T)$ is independent of the initial datum $\chi$ if we assume $||\chi||_{\infty}+||\partial_{a}\chi||_{\infty}\leq 1.$
    \item If we assume $||\chi||_{\infty}+||\partial_{a}\chi||_{\infty}\leq 1$ and that $\epsilon$ is sufficiently small, then there exists a constant $C>0$, which is independent of $\epsilon>0$, but can depend on $R>1$, such that $||\partial_{a}\varphi_{\epsilon}(t)||_{\infty}\leq \textup{e}^{-\frac{C(T-t)}{\epsilon}}$.
\end{enumerate}
\end{prop}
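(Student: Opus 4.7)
The natural strategy is the method of characteristics, applied backward from the final time $T$. For fixed $v$ and each starting pair $(t, a) \in [0, T] \times [c_0 v^{2/3}, \infty)$, consider the forward flow $X_\epsilon(s; t, a, v)$ solving
\begin{equation*}
\partial_s X_\epsilon = \tfrac{1}{\epsilon}\, \overline r_\epsilon(X_\epsilon, v)\bigl(c_0 v^{2/3} - X_\epsilon\bigr), \qquad X_\epsilon(t) = a,
\end{equation*}
and set $\varphi_\epsilon(t, \eta) := \chi(X_\epsilon(T; t, a, v), v)$. The truncating factor $1/(1 + \epsilon^{2\mu+2} a^\mu)$ kills the superlinear growth of $r$ in $a$, so the right-hand side is globally Lipschitz in the state variable and the flow exists for all $s \geq t$ and is $C^1$ in $(t, a)$. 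The region $\{a \geq c_0 v^{2/3}\}$ is forward-invariant because the vector field vanishes on the isoperimetric line; the $v$-support of $\varphi_\epsilon$ is preserved because $v$ is a conserved quantity along characteristics; and $\varphi_\epsilon$ solves (\ref{dual equation function}) by the standard computation $\frac{d}{ds}\varphi_\epsilon(s, X_\epsilon, v) = 0$. This gives $\varphi_\epsilon \in \textup{W}_T$.

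For statement 1, restrict to $v \in [1/R, R]$ and $a \in [c_0 v^{2/3}, M]$. Since $\mu > 0$ and $(a, v)$ lie in a compact set bounded away from the axes, $\overline r_\epsilon(X, v) \geq c(M, R) > 0$ for all $X \in [c_0 v^{2/3}, M]$ and $\epsilon \in (0, 1)$. Comparison with the linear decay $\dot Z = -(c/\epsilon) Z$ then gives
\begin{equation*}
0 \leq X_\epsilon(T; t, a, v) - c_0 v^{2/3} \leq \bigl(a - c_0 v^{2/3}\bigr) e^{-c(T-t)/\epsilon} \leq M\, e^{-c\overline\sigma/\epsilon},
\end{equation*}
which is smaller than $\delta_1$ as soon as $\epsilon \leq \epsilon_{M,\overline\sigma,\delta_1}$. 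Hence $\chi(X_\epsilon(T), v) = 0$, so $\varphi_\epsilon(t)$ vanishes on $\{a \leq M\}$.

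Statements 2 and 3 reduce to a sensitivity analysis of the flow. The bound $\|\varphi_\epsilon(t)\|_\infty \leq \|\chi\|_\infty$ is immediate, while differentiating the characteristic formula yields $\partial_a \varphi_\epsilon(t, \eta) = Y_\epsilon(T)\,(\partial_a \chi)(X_\epsilon(T), v)$, where $Y_\epsilon(s) := \partial_a X_\epsilon(s; t, a, v)$ satisfies
\begin{equation*}
\partial_s Y_\epsilon = \tfrac{1}{\epsilon}\bigl[(\partial_a \overline r_\epsilon)(X_\epsilon, v)(c_0 v^{2/3} - X_\epsilon) - \overline r_\epsilon(X_\epsilon, v)\bigr] Y_\epsilon, \qquad Y_\epsilon(t) = 1.
\end{equation*}
Using (\ref{ode_fusion}) in the case $\mu > 0$, a short computation shows $\partial_a \overline r_\epsilon \geq 0$ on the physical region, so the first term in the bracket is $\leq 0$ and the bracket itself is $\leq -\overline r_\epsilon(X_\epsilon, v) < 0$. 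Therefore $0 < Y_\epsilon(s) \leq 1$, which proves statement 2. For statement 3, one further observes that $\overline r_\epsilon(X, v) \geq c(R) > 0$ uniformly for $v \in [1/R, R]$ and $X \geq c_0 v^{2/3}$, splitting according to whether $\epsilon^{2\mu+2} X^\mu \leq 1$ or $> 1$, and then concludes $|Y_\epsilon(T)| \leq e^{-c(T-t)/\epsilon}$.

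The main technical obstacle is the verification that $\partial_a \overline r_\epsilon \geq 0$ on $\{a \geq c_0 v^{2/3}\}$: this is precisely where hypothesis (\ref{ode_fusion}) enters, and the specific exponent $2\mu+2$ in the truncation is chosen so that the prefactor $1/(1 + \epsilon^{2\mu+2} a^\mu)$ does not destroy the monotonicity $\partial_a r \geq \mu a^{-1} r$ imposed in the $\mu > 0$ case. The secondary bookkeeping issue is maintaining $\epsilon$-independent lower bounds on $\overline r_\epsilon$ in the very-large-$a$ regime needed for statement 3, which again hinges on exactly this form of the truncation.
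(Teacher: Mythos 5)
Your proposal is correct and follows essentially the same route as the paper: the method of characteristics for (\ref{dual equation function}), the monotonicity $\partial_a \overline{r}_\epsilon \geq 0$ derived from (\ref{ode_fusion}), and the case split $\epsilon^{2\mu+2}a^{\mu}\leq 1$ versus $\epsilon^{2\mu+2}a^{\mu}\geq 1$ yielding the uniform lower bound $\overline{r}_\epsilon\geq C(R)$ that drives the exponential decay in Statement 3. The only cosmetic difference is that you track the contraction of the forward flow through the variational equation for $Y_\epsilon=\partial_a X_\epsilon$, whereas the paper shows the backward characteristic expands, $\partial_a x(\eta,s)\geq \textup{e}^{C(R)s}$, and inverts.
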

\begin{proof}
For Statement $1$ we use Proposition \ref{prop ode}, the fact that we can find an explicit solution for equation (\ref{dual equation function}) and then we let $\epsilon\rightarrow 0$.

Statement $2$ follows directly from the fact that at time $T$ we have $\varphi_{\epsilon}(T,\cdot)=\chi(\cdot)\in\textup{C}_{\textup{b}}^{1}(\textup{S})$ and by integrating along the characteristics in equation (\ref{dual equation function}).

For Statement 3, we notice that, for $a\geq c_{0}v^{\frac{2}{3}}$, we have that
\begin{align}\label{derivative of fusion kernel}
    \partial_{a}\big[\frac{r(a,v)}{1+\epsilon^{2\mu+2}a^{\mu}}(a-c_{0}v^{\frac{2}{3}})\big]&=\partial_{a}\big[\frac{r(a,v)}{1+\epsilon^{2\mu+2}a^{\mu}}\big](a-c_{0}v^{\frac{2}{3}})+\frac{r(a,v)}{1+\epsilon^{2\mu+2}a^{\mu}}\nonumber\\
    &=\frac{\partial_{a}r(a,v)-\frac{\mu\epsilon^{2\mu+2}a^{\mu-1}r(a,v)}{1+\epsilon^{2\mu+2}a^{\mu}}}{1+\epsilon^{2\mu+2}a^{\mu}}(a-c_{0}v^{\frac{2}{3}})+\frac{r(a,v)}{1+\epsilon^{2\mu+2}a^{\mu}}\nonumber\\
    &=\frac{\partial_{a}r(a,v)-\mu a^{-1}r(a,v)\frac{\epsilon^{2\mu+2}a^{\mu}}{1+\epsilon^{2\mu+2}a^{\mu}}}{1+\epsilon^{2\mu+2}a^{\mu}}(a-c_{0}v^{\frac{2}{3}})+\frac{r(a,v)}{1+\epsilon^{2\mu+2}a^{\mu}}
    \nonumber\\
    &\geq\frac{\partial_{a}r(a,v)-\mu a^{-1}r(a,v)}{1+\epsilon^{2\mu+2}a^{\mu}}(a-c_{0}v^{\frac{2}{3}})+\frac{r(a,v)}{1+\epsilon^{2\mu+2}a^{\mu}}\nonumber\\
    &\geq\frac{r(a,v)}{1+\epsilon^{2\mu+2}a^{\mu}},
    \end{align}
where for the last inequality in (\ref{derivative of fusion kernel}), we used (\ref{ode_fusion}).

 We then analyse the cases $\epsilon^{2\mu+2}a^{\mu}\leq 1$ and $\epsilon^{2\mu+2}a^{\mu}\geq 1$ in order to deduce that $\frac{r(a,v)}{1+\epsilon^{2\mu+2}a^{\mu}}\geq C(R)\min\{\frac{a^{\mu}}{2},\frac{1}{2\epsilon^{2\mu+2}}\}$. By taking $\epsilon$ to be sufficiently small, we deduce from (\ref{derivative of fusion kernel}) that
\begin{align}\label{derivative of fusion kernel part 2}
    \partial_{a}\big[\frac{r(a,v)}{1+\epsilon^{2\mu+2}a^{\mu}}(a-c_{0}v^{\frac{2}{3}})\big]\geq C(R).
    \end{align}

We now look at the ODE
\begin{align*}
    \partial_{t}x(\eta,t)=\frac{1}{\epsilon}\overline{r}_{\epsilon}(x,v)(c_{0}v^{\frac{2}{3}}-x) \textup{ with } x(\eta,T)=a,
\end{align*}
which, by taking $s=\frac{T-t}{\epsilon}$, reduces to solving 
\begin{align*}
    \partial_{s}x(\eta,s)=\overline{r}_{\epsilon}(x,v)(x-c_{0}v^{\frac{2}{3}}) \textup{ with } x(\eta,0)=a.
\end{align*}
Using (\ref{derivative of fusion kernel part 2}), we obtain that
\begin{align*}
\partial_{a}x(\eta,s)\geq \textup{e}^{C(R)s}
\end{align*}
and Statement 3 follows using in addition the fact that $||\partial_{a}\chi||_{\infty}\leq 1.$
\end{proof}
\begin{rmk}
In the case $\mu\leq 0$, the condition (\ref{ode_fusion}) is more general due to the fact that we will not modify the fusion term $r$ in Proposition \ref{continuoussemigroup 2 positive mu}.
\end{rmk}
\begin{prop}\label{extension compact support}
Let $\epsilon>0$, fixed and $\mu>0$. Then equation (\ref{epsilonic_convergence}) holds for every $\varphi\in\textup{C}^{1}([0,T];\textup{C}^{1}(\mathbb{R}_{>0}^{2}))$ with $\sup_{s\in[0,T];\eta\in\mathbb{R}_{>0}^{2}}|\partial_{t}\varphi(\eta,t)+\varphi(\eta,t)+\partial_{a}\varphi(\eta,t)|\leq C$ if
\begin{align}\label{moment estimates to lose compact support}
    \sup_{t\in[0,T]} \int_{(0,\infty)^{2}}(a^{\mu+2}+v^{\max\{\sigma(\mu+2),2\}}+v^{\min\{\sigma(\mu+2),-1\}})f_{\epsilon}(a,v,t)\leq C(T).
\end{align}
\end{prop}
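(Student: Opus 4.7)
The plan is to approximate $\varphi$ by compactly supported test functions and pass to the limit. For each $n\in\mathbb{N}$, I would pick a smooth cutoff $\zeta_{n}\in\textup{C}_{\textup{c}}^{1}(\mathbb{R}_{>0}^{2})$ of product form $\zeta_{n}(a,v)=\xi(a/n)\,\psi_{n}(v)$, with $0\leq\zeta_{n}\leq 1$, $\zeta_{n}\nearrow 1$ pointwise on $\mathbb{R}_{>0}^{2}$, $\psi_{n}$ localizing $v$ to $[1/(2n),2n]$, and $|\partial_{a}\zeta_{n}|\leq C/n$ supported in the annulus $\{n\leq a\leq 2n\}$. Then $\varphi_{n}(\eta,t):=\varphi(\eta,t)\zeta_{n}(\eta)$ is an admissible test function in Definition \ref{definitiontimedependent}, so (\ref{epsilonic_convergence}) holds for $\varphi_{n}$. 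The task reduces to passing to the limit $n\to\infty$ in every term.

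The linear-in-$\varphi$ terms pass by dominated convergence: splitting at $v=1$ yields $v^{0}\leq v^{\max\{\sigma(\mu+2),2\}}+v^{\min\{\sigma(\mu+2),-1\}}$, so (\ref{moment estimates to lose compact support}) implies $M_{0,0}(f_{\epsilon}(t))\leq C(T)$ and $f_{\epsilon}(\cdot,t)$ is a finite measure. For the coagulation term I use $|\chi_{\varphi_{n}}|\leq 3\|\varphi\|_{\infty}$ and the kernel bound $K\lesssim v^{-\alpha}v'^{\beta}+v'^{-\alpha}v^{\beta}$; since $\alpha,\beta\in(0,1)$, both $v^{-\alpha}$ and $v^{\beta}$ are dominated by $v^{\min\{\sigma(\mu+2),-1\}}+v^{\max\{\sigma(\mu+2),2\}}$, giving $M_{0,-\alpha}\,M_{0,\beta}\leq C(T)$ and hence an integrable majorant for DCT.

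The main obstacle is the fusion term, where $\partial_{a}\varphi_{n}=\zeta_{n}\partial_{a}\varphi+\varphi\,\partial_{a}\zeta_{n}$. The crucial reduction is that $f_{\epsilon}$ is supported on the isoperimetric set $\{a\geq c_{0}v^{\frac{2}{3}}\}$, so $|c_{0}v^{\frac{2}{3}}-a|\leq a$ there, which replaces the troublesome factor $a+v^{\frac{2}{3}}$ by simply $a$ in all estimates. For the $\zeta_{n}\partial_{a}\varphi$ piece, the integrand is dominated by $\tfrac{C}{\epsilon}a^{\mu+1}v^{\sigma}f_{\epsilon}$, and Young's inequality $a^{\mu+1}v^{\sigma}\lesssim a^{\mu+2}+v^{\sigma(\mu+2)}$ combined with splitting $v^{\sigma(\mu+2)}$ at $v=1$ shows that this is integrable by (\ref{moment estimates to lose compact support}), so DCT applies. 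For the $\varphi\,\partial_{a}\zeta_{n}$ piece, $|\varphi\,\partial_{a}\zeta_{n}|\leq C\|\varphi\|_{\infty}/n$ on $\{n\leq a\leq 2n\}$, hence
\begin{equation*}
\frac{1}{\epsilon}\Bigl|\int_{0}^{t}\!\!\int r(\eta)(c_{0}v^{\frac{2}{3}}-a)f_{\epsilon}\,\varphi\,\partial_{a}\zeta_{n}\,\der\eta\,\der s\Bigr|\lesssim \frac{\|\varphi\|_{\infty}}{\epsilon\,n}\int_{0}^{t}\!\!\int a^{\mu+1}v^{\sigma}f_{\epsilon}\,\der\eta\,\der s,
\end{equation*}
which tends to $0$ as $n\to\infty$ by the uniform bound on $\int a^{\mu+1}v^{\sigma}f_{\epsilon}$. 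Combining these convergences gives (\ref{epsilonic_convergence}) for $\varphi$.
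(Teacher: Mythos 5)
Your proposal is correct and follows essentially the same route as the paper's proof: truncate with compactly supported cutoffs $\zeta_{n}$, pass to the limit by dominated convergence, control the coagulation term via $M_{0,-\alpha}M_{0,\beta}$, and handle the fusion term by using the support in $\{a\geq c_{0}v^{\frac{2}{3}}\}$ to get $|r(\eta)(c_{0}v^{\frac{2}{3}}-a)|\lesssim a^{\mu+1}v^{\sigma}$ together with Young's inequality and the vanishing of the $\varphi\,\partial_{a}\zeta_{n}$ boundary contribution. The only cosmetic difference is that you show the cutoff-derivative term is $O(1/n)$ directly, whereas the paper bounds it by $M_{\mu,\sigma}$ and invokes dominated convergence; both are valid.
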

\begin{proof}
Assume for simplicity that $\varphi\in \textup{C}_{\textup{b}}^{1}(\mathbb{R}_{>0}^{2})$. We construct a sequence of functions $\{\zeta_{n}\}_{n\in\mathbb{N}}\subset \textup{C}_{\textup{c}}^{1}(\mathbb{R}_{>0}^{2})$ such that $\zeta_{n}(\eta)=1$ when $\eta\in[\frac{1}{n},n]^{2}$ and $\zeta_{n}(\eta)=0$ when  $\eta\not\in[\frac{1}{2n},2n]^{2}$. The idea is to use Lebegue's dominated convergence theorem in (\ref{weak_form_time_dependent}) for the functions $\varphi_{n}=\zeta_{n}\varphi$. We thus show below only the needed estimates for the proof. The term with the coagulation kernel in (\ref{weak_form_time_dependent}) can be bounded directly by
\begin{align*}
    \big|\langle \mathbb{K}[f_{\epsilon}],\varphi_{n}\rangle \big|\leq 3C\sup_{s\in[0,T]} M_{0,-\alpha}(f(s))\sup_{s\in[0,T]} M_{0,\beta}(f(s))\leq C.
\end{align*}
 In order to control the fusion term in (\ref{weak_form_time_dependent}), notice that we can construct $\zeta_{n}$ such that $a\zeta_{n}(\eta)\leq C,$ for some constant independent of $n\in\mathbb{N}.$ Moreover, we know that the fusion kernel satisfies (\ref{fusion_form}) and that $a\geq c_{0}v^{\frac{2}{3}}$. Thus
 \begin{align*}
     |r(a,v)(c_{0}v^{\frac{2}{3}}-a)|\lesssim a^{\mu+1}v^{\sigma}+a^{\mu}v^{\sigma+\frac{2}{3}}\leq 2 a^{\mu+1}v^{\sigma}.
 \end{align*}
 Using the above inequality, we can bound from above the fusion term 
 \begin{align*}
     \frac{1}{\epsilon}\big|\int_{(0,\infty)^{2}}r(\eta)(c_{0}v^{\frac{2}{3}}-a)f_{\epsilon}(\eta,s)\partial_{a}\varphi_{n}(\eta)\der \eta\big|
 \end{align*} 
 by 
 \begin{align*}
\frac{1}{\epsilon}\big|\int_{(0,\infty)^{2}}a^{\mu+1}v^{\sigma}f_{\epsilon}(\eta,s)\partial_{a}\varphi(\eta)\der \eta\big|+\frac{1}{\epsilon}\big|\int_{(0,\infty)^{2}}a^{\mu+1}v^{\sigma}f_{\epsilon}(\eta,s)\varphi(\eta)\partial_{a}\zeta_{n}(\eta)\der \eta\big|
\end{align*}
up to a multiplicity constant. We then use Young's inequality to deduce that
\begin{align*}
\big|\int_{(0,\infty)^{2}}a^{\mu+1}v^{\sigma}f_{\epsilon}(\eta,s)\partial_{a}\varphi(\eta)\der \eta\big|\lesssim \sup_{s\in[0,T]}M_{\mu+2,0}(f_{\epsilon}(s))+\sup_{s\in[0,T]}M_{0,\sigma(\mu+2)}(f_{\epsilon}(s))
\end{align*}
and
\begin{align*}
\big|\int_{(0,\infty)^{2}}a^{\mu+1}v^{\sigma}f_{\epsilon}(\eta,s)\varphi(\eta)\partial_{a}\zeta_{n}(\eta)\der \eta\big|&\lesssim M_{\mu,\sigma}(f_{\epsilon}(s))\\
&\lesssim \sup_{s\in[0,T]}M_{\mu+1,0}(f_{\epsilon}(s))+\sup_{s\in[0,T]}M_{0,\sigma(\mu+2)}(f_{\epsilon}(s))+1.
\end{align*}
Thus, the moment estimates in (\ref{moment estimates to lose compact support}) suffice to conclude our proof for $\varphi\in \textup{C}_{\textup{b}}^{1}(\mathbb{R}_{>0}^{2})$. To prove that equation (\ref{epsilonic_convergence}) holds for every $\varphi\in\textup{C}^{1}([0,T];\textup{C}^{1}(\mathbb{R}_{>0}^{2}))$ with $\sup_{s\in[0,T];\eta\in\mathbb{R}_{>0}^{2}}|\partial_{t}\varphi(\eta,t)+\varphi(\eta,t)+\partial_{a}\varphi(\eta,t)|\leq C$, we argue in a similar manner as before using the bound on the time derivative too.
\end{proof}
\begin{proof}[Proof of Proposition \ref{solutions stay close to the isoperimetric line}]
Let $\overline{\sigma}>0$ and $\delta_{1},\delta_{2}\in(0,1)$. Assume $\Phi\in\textup{C}^{1}_{\textup{b}}(\mathbb{R}_{>0}^{2})$, with $||\Phi||_{\infty}\leq 1$. Assume there exists $R>1$ such that $\Phi(\eta)=0$ if $v\not\in[\frac{1}{R},R].$ The extension of the result from functions compactly supported in the $v$ variable to functions that do not necessarily have compact support is straightforward using moment estimates, and thus we omit the details.  Suppose in addition that $\Phi$ is such that $\Phi(\eta)=0$ when $a< c_{0}v^{\frac{2}{3}}+\delta_{1}$. Let, in addition, $t_{1},t_{2}$, with $t_{2}\geq \overline{\sigma}$, and $t_{1}$ such that $t_{2}-t_{1}=\tau>0$. Notice that $\tau=\tau(\delta_{2})$ depends on $\tau_{2}$, but we will not write this dependence explicitly in order to simplify our notation. Let $M>0,$ sufficiently large (and also depending on $\delta_{2}$), to be fixed later, and let $\varphi_{\epsilon}$ be the solution found in Proposition \ref{continuoussemigroup 2 positive mu}, if $\mu>0$, associated to the measure $f_{\epsilon}$ such that $\varphi_{\epsilon}(\eta,t_{2})=\Phi(\eta).$ We want to prove (\ref{prop small outside iso line}).

Notice first that, from Proposition \ref{continuoussemigroup 2 positive mu}, Statement 2, there exists a constant independent of $\epsilon,$ such that $\sup_{s\in[t_{1},t_{2}],\eta\in\textup{S}}|\varphi(\eta,s)|\leq C$. Then, for $t_{1},t_{2}\in(0,T]$ such that $t_{2}-t_{1}=\tau,$ we have
\begin{align}\label{kernel term is small}
&\big|\int_{t_{1}}^{t_{2}}\int_{(0,\infty)^{2}}\int_{(0,\infty)^{2}} K(\eta,\eta')f(\eta,s)f(\eta',s)[\varphi_{\epsilon}(\eta+\eta',s)-\varphi_{\epsilon}(\eta,s)-\varphi_{\epsilon}(\eta',s)]\der \eta'\der\eta\der s\big|\nonumber\\
&\leq C\tau\sup_{s\in[0,T]}[M_{0,-\alpha}(f(s)) M_{0,\beta}(f(s))]\leq C\tau,
\end{align}
where we made use of the fact that $[M_{0,1}+M_{0,-1}](f_{\epsilon})$ is uniformly bounded from above, independently of $\epsilon\in(0,1)$.

In order to estimate the term with the fusion kernel, we notice that
\begin{align}\label{fusion term small}
\frac{1}{\epsilon}\big|\int_{t_{1}}^{t_{2}}&\int_{(0,\infty)^{2}}[r(a,v)-\overline{r}_{\epsilon}(a,v)](c_{0}v^{\frac{2}{3}}-a)f(\eta,s)\partial_{a}\varphi_{\epsilon}(\eta,s)\der\eta\der s\big|\nonumber\\
&\leq \frac{1}{\epsilon}\big|\int_{t_{1}}^{t_{2}}\int_{\{a\leq \frac{1}{\epsilon}\}}[r(a,v)-\overline{r}_{\epsilon}(a,v)](c_{0}v^{\frac{2}{3}}-a)f(\eta,s)\partial_{a}\varphi_{\epsilon}(\eta,s)\der\eta\der s\big|\nonumber\\
&+\frac{2}{\epsilon}\int_{t_{1}}^{t_{2}}\int_{\{a>\frac{1}{\epsilon}\}}r(a,v)(c_{0}v^{\frac{2}{3}}+a)f(\eta,s)|\partial_{a}\varphi_{\epsilon}(\eta,s)|\der\eta\der s.
\end{align}
For the first term in (\ref{fusion term small}), we make use of the exponential decay of  $\varphi_{\epsilon}$ proven in Statement 3  of Proposition \ref{continuoussemigroup 2 positive mu} in order to obtain
\begin{align}\label{small positive distance iso line area of order one}
\frac{1}{\epsilon}\big|\int_{t_{1}}^{t_{2}}&\int_{\{a\leq \frac{1}{\epsilon}\}}[r(a,v)-\overline{r}_{\epsilon}(a,v)](c_{0}v^{\frac{2}{3}}-a)f(\eta,s)\partial_{a}\varphi_{\epsilon}(\eta,s)\der\eta\der s\big|\nonumber\\
&\lesssim\int_{t_{1}}^{t_{2}}\int_{\{a\leq\frac{1}{\epsilon}\}}|r(a,v)-\overline{r}_{\epsilon}(a,v)|(c_{0}v^{\frac{2}{3}}+a)f(\eta,s)\frac{\textup{e}^{-\frac{C(t_{2}-s)}{\epsilon}}}{\epsilon}\der\eta\der s\nonumber\\
&\lesssim 2R_{1} \int_{t_{1}}^{t_{2}}\int_{\{a\leq \frac{1}{\epsilon}\}}\epsilon^{2\mu+2}a^{2\mu+1}v^{\sigma}f(\eta,s)\frac{\textup{e}^{-\frac{C(t_{2}-s)}{\epsilon}}}{\epsilon}\der\eta\der s\nonumber\\
&\leq 2R_{1}\epsilon\int_{t_{1}}^{t_{2}}\sup_{s\in[0,T]}M_{0,\sigma}(f_{\epsilon}(t))\frac{\textup{e}^{-\frac{C(t_{2}-s)}{\epsilon}}}{\epsilon}\der s\leq 2R_{1}\epsilon C(T)\int_{t_{1}}^{t_{2}}\frac{\textup{e}^{-\frac{C(t_{2}-s)}{\epsilon}}}{\epsilon}\der s\nonumber\\
&\leq C\epsilon.
\end{align}

We can then use Statement 2 of Proposition \ref{continuoussemigroup 2 positive mu}, the upper bound (\ref{fusion_form}) and (\ref{upper estimates for region positive distance from iso line}) in order to control the region containing large values of $a$, namely
\begin{align}\label{small positive distance iso line large values area}
  \frac{2}{\epsilon}\int_{t_{1}}^{t_{2}} &\int_{\{a>\frac{1}{\epsilon}\}}r(a,v)(c_{0}v^{\frac{2}{3}}+a)f(\eta,s)|\partial_{a}\varphi_{\epsilon}(\eta,s)|\der\eta\der s\nonumber\\
       &\leq \frac{4R_{1}}{\epsilon}\int_{t_{1}}^{t_{2}}\int_{\{a>\frac{1}{\epsilon}\}}a^{\mu+1}v^{\sigma}f(\eta,s)\der\eta\der s\leq \frac{4R_{1}\epsilon}{\epsilon}\int_{t_{1}}^{t_{2}}\int_{\{a>\frac{1}{\epsilon}\}}a^{\mu+2}v^{\sigma}f(\eta,s)\der\eta\der s\nonumber\\
             &\leq 4R_{1}\tau\sup_{t\in[0,T]}[M_{\mu+3,0}(f_{\epsilon}(t))+M_{0,\sigma(\mu+3)}(f_{\epsilon}(t))].
\end{align}
Combining the estimates (\ref{fusion term small}), (\ref{small positive distance iso line area of order one}) and (\ref{small positive distance iso line large values area}), we deduce that
\begin{align}\label{fusion term small final}
\frac{1}{\epsilon}\big|\int_{t_{1}}^{t_{2}}&\int_{(0,\infty)^{2}}[r(a,v)-\overline{r}_{\epsilon}(a,v)](c_{0}v^{\frac{2}{3}}-a)f(\eta,s)\partial_{a}\varphi_{\epsilon}(\eta,s)\der\eta\der s\big|\leq C\epsilon+C\tau.
\end{align}

Proposition \ref{extension compact support} gives us that we can test (\ref{weak_form_time_dependent}) with continuous functions that are not necessarily compactly supported in the $a$ variable, as long as we work with functions in  $\textup{C}^{1}_{\textup{b}}(\mathbb{R}_{>0}^{2})$. We then make use of the fact that $\varphi_{\epsilon}$ satisfies (\ref{dual equation function}) and then use the estimates in (\ref{kernel term is small}) and (\ref{fusion term small final}) in order to deduce that, for every $\epsilon\leq \epsilon_{\delta_{1},\delta_{2}}$, with $\epsilon_{\delta_{1},\delta_{2}}$ (notice here that $M$ and $\tau$ depend on $\delta_{2}$) as in Proposition \ref{continuoussemigroup 2 positive mu}, Statement 1, we have
\begin{align}\label{sketch proof mass concentration}
  \big| \int_{(0,\infty)^{2}}f_{\epsilon}(\eta,t_{2})\Phi(a,v)\der \eta\big|&= \big|\int_{(0,\infty)^{2}}f_{\epsilon}(\eta,t_{2})\varphi_{\epsilon}(\eta,t_{2})\der \eta\big|\leq\big|\int_{(0,\infty)^{2}}f_{\epsilon}(\eta,t_{1})\varphi_{\epsilon}(\eta,t_{1})\der \eta\big|+C\tau\nonumber \\
    &=\big|\int_{\{a\geq M\}}f_{\epsilon}(\eta,t_{1})\varphi_{\epsilon}(\eta,t_{1})\der \eta\big|+C\tau\nonumber\nonumber\\
&\leq CM^{-1}\int_{\{a\geq M\}}af_{\epsilon}(\eta,t_{1})\der \eta+C\tau,
\end{align}
where we used that  $\supp\varphi_{\epsilon}\subseteq\{a\geq M\}$ from Statement 1 of Proposition \ref{continuoussemigroup 2 positive mu} and the fact that $\sup_{t\in[0,T]}M_{1,0}(f_{\epsilon}(t))\leq C(T)$, with $C(T)$ being independent of $\epsilon$. We choose $M$ such that $C(T)CM^{-1}+C\tau\leq\delta_{2}$.  

Thus, we obtain that
\begin{align}\label{far away iso line c1 functions}
\big|\int_{(0,\infty)^{2}}f_{\epsilon}(\eta,t_{2})\Phi(\eta)\der \eta\big|\leq \delta_{2},
\end{align}
for every $t_{2}\geq \overline{\sigma}$ since we can choose $\tau$ to be sufficiently small.

\end{proof}
We are now able to prove the equicontinuity in time of the sequence $\{f_{\epsilon}\}_{\epsilon\in(0,1)}$. We proved in Proposition \ref{solutions stay close to the isoperimetric line} that if we are at a positive distance from the line $a=c_{0}v^{\frac{2}{3}}$, the measure $f_{\epsilon}$ takes values close to zero. Near the line $a=c_{0}v^{\frac{2}{3}},$ we use the fact that a function of the form $\varphi(a,v)$ can be approximated in terms of a function depending only on $v$, making negligible the contribution coming from the fusion term.
\begin{proof}[Proof of Proposition \ref{existence of limit for f epsilon}]
Fix $\overline{\sigma}>0$. Let $\delta_{1}, \delta_{2}>0$ as in Proposition \ref{solutions stay close to the isoperimetric line}. We define a continuous function $\chi_{\delta_{1}}:\mathbb{R}_{>0}^{2}\rightarrow[0,1]$ to be equal to one in the region where $\{c_{0}v^{\frac{2}{3}}\leq a \leq c_{0}v^{\frac{2}{3}}+\delta_{1}\}$ and zero when $a>c_{0}v^{\frac{2}{3}}+2\delta_{1}$. Fix $\varphi\in\textup{C}^{1}_{\textup{c}}(\mathbb{R}_{>0}^{2})$. It suffices to prove the statement for fixed $\varphi\in\textup{C}^{1}_{\textup{c}}(\mathbb{R}_{>0}^{2})$. The passage from functions $\varphi\in\textup{C}^{1}_{\textup{c}}(\mathbb{R}_{>0}^{2})$ to functions in $\textup{C}_{0}(\mathbb{R}_{>0}^{2})$ is then straightforward using moment estimates and the fact that we can approximate a function in $\textup{C}_{\textup{c}}(\mathbb{R}_{>0}^{2})$ with a $\textup{C}^{1}_{\textup{c}}(\mathbb{R}_{>0}^{2})$ function on a compact set. From Proposition \ref{solutions stay close to the isoperimetric line}, we deduce that there exists $\epsilon_{\delta_{1},\delta_{2}}\in(0,1)$ such that for all $\epsilon\leq \epsilon_{\delta_{1},\delta_{2}},$ the following holds
\begin{align}\label{first part equicontinuity}
\big|&\int_{(0,\infty)^{2}}\varphi(\eta)f_{\epsilon}(\der \eta,t)-\int_{(0,\infty)^{2}}\varphi(\eta)f_{\epsilon}(\der \eta,s)\big|\nonumber\\
&\leq\big|\int_{(0,\infty)^{2}}\varphi(\eta)\chi_{\delta_{1}}(\eta)[f_{\epsilon}(\der \eta,t)-f_{\epsilon}(\der \eta,s)]\big|+\big|\int_{(0,\infty)^{2}}\varphi(\eta)(1-\chi_{\delta_{1}}(\eta))[f_{\epsilon}(\der \eta,t)-f_{\epsilon}(\der \eta,s)]\big|\nonumber\\
&\leq\big|\int_{(0,\infty)^{2}}\varphi(\eta)\chi_{\delta_{1}}(\eta)[f_{\epsilon}(\der \eta,t)-f_{\epsilon}(\der \eta,s)]\big|+\delta_{2}.
\end{align}
From the support of $\chi_{\delta_{1}}$, the fact that the measure $f_{\epsilon}$ is supported in the region $\{a\geq c_{0}v^{\frac{2}{3}}\}$ and the continuity of $\varphi$, we can find a function $\overline{\varphi}:\mathbb{R}_{>0}\rightarrow\mathbb{R}$ depending only on $v$, $\overline{\varphi}\in\textup{C}_{\textup{b}}^{1}(\mathbb{R}_{>0})$, such that $|\varphi(a,v)-\overline{\varphi}(v)|\leq \delta_{3}$, for $\delta_{3}$ sufficiently small, and for any $(a,v)\in\textup{K}\subset\mathbb{R}_{>0}^{2}$, where $\textup{K}$ is a compact set. Let $K=[\frac{1}{M},{M}]^{2}$ for some fixed $M>1$. We have that
\begin{align}\label{second part equicontinuity}
&\big|\int_{(0,\infty)^{2}}\varphi(\eta)\chi_{\delta_{1}}(\eta)[f_{\epsilon}(\der \eta,t)-f_{\epsilon}(\der \eta,s)]\bigg|\nonumber\\
&\leq \int_{(0,\infty)^{2}}|\varphi(\eta)-\overline{\varphi}(v)|\chi_{\delta_{1}}(\eta)|f_{\epsilon}(\der \eta,t)-f_{\epsilon}(\der \eta,s)|+\big|\int_{(0,\infty)^{2}}\overline{\varphi}(v)\chi_{\delta_{1}}(\eta)[f_{\epsilon}(\der \eta,t)-f_{\epsilon}(\der \eta,s)]\big|\nonumber\\
&\leq \int_{K}|\varphi(\eta)-\overline{\varphi}(v)|\chi_{\delta_{1}}(\eta)|f_{\epsilon}(\der \eta,t)-f_{\epsilon}(\der \eta,s)|\nonumber\\
&+\int_{\mathbb{R}_{>0}^{2}\setminus K}|\varphi(\eta)-\overline{\varphi}(v)|\chi_{\delta_{1}}(\eta)|f_{\epsilon}(\der \eta,t)-f_{\epsilon}(\der \eta,s)|+\big|\int_{(0,\infty)^{2}}\overline{\varphi}(v)\chi_{\delta_{1}}(\eta)[f_{\epsilon}(\der \eta,t)-f_{\epsilon}(\der \eta,s)]\big|.\nonumber
\end{align}
In order to control the contribution of the region $\mathbb{R}_{>0}^{2}\setminus K$, we use moment estimates. For example, for the region $\{a>M\}$, we obtain that 
\begin{align*}
    \int_{\{a>M\}}|\varphi(\eta)-\overline{\varphi}(v)|\chi_{\delta_{1}}(\eta)|f_{\epsilon}(\der \eta,t)-f_{\epsilon}(\der \eta,s)|\lesssim 2M^{-1} \sup_{s\in[0,T]}M_{1,0}(f_{\epsilon}(s)).
\end{align*}
We can use the same argument for the region $\{v>M\}\cup\{v<\frac{1}{M}\}\cup\{a<\frac{1}{M}\}$. Thus
\begin{align}
&\big|\int_{(0,\infty)^{2}}\varphi(\eta)\chi_{\delta_{1}}(\eta)[f_{\epsilon}(\der \eta,t)-f_{\epsilon}(\der \eta,s)]\bigg|\nonumber\\
&\lesssim 2\delta_{3} \sup_{s\in[0,T]}M_{0,0}(f_{\epsilon}(s))+2M^{-1} \sup_{s\in[0,T]}M_{1,0}(f_{\epsilon}(s))+\big|\int_{(0,\infty)^{2}}\overline{\varphi}(v)\chi_{\delta_{1}}(\eta)[f_{\epsilon}(\der \eta,t)-f_{\epsilon}(\der \eta,s)]\big|\nonumber\\
&\leq C(T)\delta_{3}+\big|\int_{(0,\infty)^{2}}\overline{\varphi}(v)\chi_{\delta_{1}}(\eta)[f_{\epsilon}(\der \eta,t)-f_{\epsilon}(\der \eta,s)]\big|,
\end{align}
since the term $M^{-1} \sup_{s\in[0,T]}M_{1,0}(f_{\epsilon}(s))$ can be made sufficiently small by taking $M$ sufficiently large. We can write the remaining term as $\overline{\varphi}(v)\chi_{\delta_{1}}(\eta)=\overline{\varphi}(v)-\overline{\varphi}(v)(1-\chi_{\delta_{1}}(\eta))$. For $\overline{\varphi}(v)(1-\chi_{\delta_{1}}(\eta))$ we can make use again of Proposition  \ref{solutions stay close to the isoperimetric line} in order to prove that $\big|\int_{(0,\infty)^{2}}\overline{\varphi}(v)(1-\chi_{\delta_{1}}(\eta))[f_{\epsilon}(\der \eta,t)-f_{\epsilon}(\der \eta,s)]\big|$ is small. On the other hand, we deduce from Proposition \ref{extension compact support} that we can test equation (\ref{epsilonic_convergence}) with $\overline{\varphi}(v)$ and obtain, as the fusion term disappears when testing with functions depending only on $v$ that
\begin{align}\label{third part equicontinuity}
\big|\int_{(0,\infty)^{2}}\overline{\varphi}(v)[f_{\epsilon}(\der \eta,t)-f_{\epsilon}(\der \eta,s)]\big|\leq 3||\overline{\varphi}||_{\infty}\int_{s}^{t}M_{0,-\alpha}(f_{\epsilon}(z))M_{0,\beta}(f_{\epsilon}(z))\der z\leq C|t-s|.
\end{align}
From (\ref{first part equicontinuity}), (\ref{second part equicontinuity}) and (\ref{third part equicontinuity}) the equicontinuity in time of the sequence follows for every $s,t\in[\overline{\sigma},T].$
\end{proof}
We are now in a position to prove Theorem \ref{existencelimitandcorrectsupport}.
\begin{proof}[Proof of Theorem \ref{existencelimitandcorrectsupport}]
Let $\overline{\sigma}>0$. Using Proposition \ref{existence of solutions for the equation} together with Proposition \ref{existence of limit for f epsilon}, we deduce from Arzelà–Ascoli theorem that there exists a subsequence of $\{f_{\epsilon}\}$, which we do not relabel, and an $\overline{f}\in\textup{C}([\overline{\sigma},T];\mathscr{M}^{I}_{+}(\mathbb{R}_{>0}^{2})),$ such that $f_{\epsilon}(t)$ converges to $f(t)$ in the weak-$^{\ast}$ as $\epsilon\rightarrow 0$, for every $t\in[\overline{\sigma},T].$
\end{proof}
\begin{proof}[Proof of Lemma \ref{lemma dirac measure}]
     We pass to the limit in Proposition \ref{solutions stay close to the isoperimetric line}.
\end{proof}
\subsection{Reduction to the one-dimensional coagulation model in the case of fast fusion}\label{reduction to the one-dimensional case section}
We now consider the behavior of the solutions of (\ref{1 over epsilon strongfusioneq}) as $\Lambda\rightarrow 0$. We first show that we can extend $F$ in Lemma \ref{lemma dirac measure} to be continuous at time $t=0$ as mentioned in Theorem \ref{pass back to standard coagulation}.
\begin{prop}\label{behavior near initial data}
Let $T>0$. Let $F_{\epsilon}\in\textup{C}([0,T];\mathscr{M}_{+}(\mathbb{R}_{>0}))$ be as in Definition \ref{limit function}. Then the sequence $\{F_{\epsilon}\}_{\epsilon\in(0,1)}$ is equicontinuous in time, and thus we can deduce that there exists a limit $F\in\textup{C}([0,T];\mathscr{M}_{+}(\mathbb{R}_{>0}))$ as $\epsilon\rightarrow 0$ for a subsequence of $\{F_{\epsilon}\}_{\epsilon\in(0,1)}$, which we do not relabel.
\end{prop}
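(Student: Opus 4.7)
The plan is to establish time-equicontinuity of $\{F_\epsilon\}_{\epsilon\in(0,1)}$ uniformly in $\epsilon$ on $[0,T]$ and then invoke Arzelà--Ascoli. The crucial observation is that the $a$-dependence of the cut-off $\chi_\epsilon$ in (\ref{one dim measures}) has been engineered so that the test function $\Psi_\epsilon(\eta):=\chi_\epsilon(a)\varphi(v)$ interacts only very weakly with the fusion term of (\ref{epsilonic_convergence}).

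Fix $\varphi\in \textup{C}_{\textup{c}}^{1}(\mathbb{R}_{>0})$; the passage to $\varphi\in\textup{C}_{0}(\mathbb{R}_{>0})$ will follow by density together with the uniform bound on $M_{0,0}(f_\epsilon)$. Since $\Psi_\epsilon\in \textup{C}_{\textup{b}}^{1}(\mathbb{R}_{>0}^{2})$ and (\ref{moment estimates to lose compact support}) is implied by (\ref{nicemomentestimates}), Proposition \ref{extension compact support} permits testing (\ref{epsilonic_convergence}) with $\Psi_\epsilon$ extended constantly in time, yielding
\begin{align*}
&\int_{(0,\infty)} F_\epsilon(v,t)\varphi(v)\,\der v - \int_{(0,\infty)} F_\epsilon(v,s)\varphi(v)\,\der v\\
&\qquad=\int_s^t\langle \mathbb{K}[f_\epsilon(z)],\Psi_\epsilon\rangle\,\der z+\frac{1}{\epsilon}\int_s^t\int_{(0,\infty)^2} r(\eta)(c_0 v^{\frac{2}{3}}-a)\chi_\epsilon'(a)\varphi(v)f_\epsilon(\eta,z)\,\der\eta\,\der z.
\end{align*}
The coagulation term is bounded by $C\|\varphi\|_\infty|t-s|$ via the upper bound in (\ref{lower_bound_kernel}) together with the uniform control on $M_{0,-\alpha}(f_\epsilon)$ and $M_{0,\beta}(f_\epsilon)$ provided by (\ref{nicemomentestimates}).

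The heart of the matter is the fusion term, which carries the threatening factor $1/\epsilon$. By construction, $\chi_\epsilon'$ is supported in $[\epsilon^{-2},2\epsilon^{-2}]$ with $\|\chi_\epsilon'\|_\infty\lesssim\epsilon^2$, and since $a\geq c_0 v^{\frac{2}{3}}$ on $\supp f_\epsilon$, (\ref{fusion_form}) gives the pointwise bound $|r(a,v)(c_0 v^{\frac{2}{3}}-a)|\lesssim a^{\mu+1}v^\sigma$. Combining these, the fusion term is bounded by
$$C\,\epsilon\,\|\varphi\|_\infty\,|t-s|\,\sup_{z\in[0,T]}\int_{\{a\geq \epsilon^{-2}\}}a^{\mu+1}v^\sigma f_\epsilon(\eta,z)\,\der\eta.$$
On the localisation region one has $a^{\mu+1}\leq \epsilon^2 a^{\mu+2}$, and the elementary Young inequality
$$a^{\mu+2}v^\sigma\leq \frac{\mu+2}{\mu+3}\,a^{\mu+3}+\frac{1}{\mu+3}\,v^{\sigma(\mu+3)}$$
(valid for any real $\sigma$ since $a,v>0$) reduces the remaining supremum to $M_{\mu+3,0}$ and $M_{0,\sigma(\mu+3)}$, both uniformly bounded by (\ref{nicemomentestimates}). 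The fusion contribution is therefore of order $\epsilon^3|t-s|$, hence uniformly harmless.

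Combining the two estimates yields $\bigl|\int F_\epsilon(v,t)\varphi-\int F_\epsilon(v,s)\varphi\bigr|\leq C(\varphi)|t-s|$ uniformly in $\epsilon\in(0,1)$. The uniform total-mass bound $\int F_\epsilon(v,t)\,\der v\leq M_{0,0}(f_\epsilon(t))$ gives weak-$\ast$ relative compactness at each $t$, so Arzelà--Ascoli in $\textup{C}([0,T];\mathscr{M}_{+}(\mathbb{R}_{>0}))$ (with the metrizable weak-$\ast$ topology on bounded balls) produces a subsequential limit $F\in\textup{C}([0,T];\mathscr{M}_{+}(\mathbb{R}_{>0}))$. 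On $(0,T]$ this $F$ must coincide with the limit identified in Lemma \ref{lemma dirac measure} (both act identically on functions of $v$ alone), and at $t=0$ dominated convergence $\chi_\epsilon\to 1$ applied to $\int f_{\textup{in}}(a,v)\chi_\epsilon(a)\varphi(v)\,\der\eta$ recovers the initial value fixed in Definition \ref{definitionlimit}. The only genuine obstacle throughout is taming the $1/\epsilon$ in the fusion integral; it is resolved by the combination of the smallness $\|\chi_\epsilon'\|_\infty\lesssim\epsilon^2$, the large-$a$ concentration of $\chi_\epsilon'$, and the high-order moment control furnished by (\ref{nicemomentestimates}).
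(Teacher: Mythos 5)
Your proof is correct and follows essentially the same route as the paper: test with $\chi_\epsilon(a)\varphi(v)$, control the coagulation term via the $M_{0,-\alpha}$ and $M_{0,\beta}$ moments, and beat the $1/\epsilon$ in the fusion term using $\|\partial_a\chi_\epsilon\|_\infty\lesssim\epsilon^2$ together with the high-order moment bounds from (\ref{nicemomentestimates}). The only (harmless) difference is that you additionally exploit the localisation $a\geq\epsilon^{-2}$ to gain an extra factor $\epsilon^2$, obtaining a sharper $O(\epsilon^3)$ bound where the paper settles for $O(\epsilon)$.
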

\begin{proof}
Let $s,t\in[0,T]$, with $t>s$. Fix $\varphi\in\textup{C}_{0}(\mathbb{R}_{>0})$. We then have that
\begin{align*}
  &\big|\int_{(0,\infty)}\varphi(v)F_{\epsilon}(\der v,t)-\int_{(0,\infty)}\varphi(v)F_{\epsilon}(\der v,s)\big|\\
  &=\big|\int_{(0,\infty)^{2}}f_{\epsilon}(a,v,t)\chi_{\epsilon}(a)\varphi(v)\der v \der a-\int_{(0,\infty)^{2}}f_{\epsilon}(a,v,s)\chi_{\epsilon}(a)\varphi(v)\der v \der a \big|\\
&\leq \int_{s}^{t}\langle\mathbb{K}[f_{\epsilon}(z)],\chi_{\epsilon}\varphi\rangle\der z+\frac{1}{\epsilon}\int_{s}^{t}\int_{(0,\infty)^{2}}r(a,v)|a+c_{0}v^{\frac{2}{3}}||\partial_{a}\chi_{\epsilon}(a)\varphi(v)|f_{\epsilon}(\eta,z)\der \eta\der z\\
&\lesssim |t-s|[\sup_{z\in[0,T]}M_{0,-\alpha}(f_{\epsilon}(z))\sup_{z\in[0,T]}M_{0,\beta}(f_{\epsilon}(z))+\epsilon \big(\sup_{z\in[0,T]}M_{\mu+1,\sigma}(f_{\epsilon}(z))+\sup_{z\in[0,T]}M_{\mu,\sigma+\frac{2}{3}}(f_{\epsilon}(z))\big)]\\
&\leq  |t-s|[C(T)+C\epsilon\big(\sup_{z\in[0,T]}M_{\mu+2,0}(f_{\epsilon}(z))+\sup_{z\in[0,T]}M_{0,\sigma(\mu+2)}(f_{\epsilon}(z))\big)]\lesssim |t-s|.
\end{align*}
\end{proof}
\begin{lem}
 Let $\overline{\sigma}>0$. Let $F$ as in Theorem \ref{pass back to standard coagulation}. We then have that 
 $F$ satisfies the standard coagulation equation, namely, for every $t\in[\overline{\sigma},T]$ and every $\varphi\in\textup{C}_{\textup{c}}(\mathbb{R}_{>0})$, the following holds
\begin{align}\label{equation with sigma}
&\int_{(0,\infty)}F(v,t)\varphi(v)\der v-\int_{(0,\infty)}F(v,\overline{\sigma})\varphi(v)\der v\nonumber\\
&=\int_{\overline{\sigma}}^{t}\int_{(0,\infty)}K(c_{0}v^{\frac{2}{3}},v,c_{0}v'^{\frac{2}{3}},v')F(v,s)F(v',s)[\varphi(v+v')-\varphi(v)-\varphi(v')]\der v'\der v\der s.
\end{align}
\end{lem}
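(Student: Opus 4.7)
The plan is to use the weak formulation \eqref{epsilonic_convergence} for $f_\epsilon$ with a test function that depends only on the volume variable $v$, and then pass to the limit $\epsilon\to 0$ using the concentration information collected in Proposition \ref{solutions stay close to the isoperimetric line} and Lemma \ref{lemma dirac measure}. Fix $\varphi\in\textup{C}_{\textup{c}}(\mathbb{R}_{>0})$. First, I would approximate $\varphi$ by $\textup{C}^{1}_{\textup{c}}$ functions and apply Proposition \ref{extension compact support} to justify testing \eqref{epsilonic_convergence} between times $\overline{\sigma}$ and $t$ with $\psi(\eta,s):=\varphi(v)$, using that the moment bound \eqref{moment estimates to lose compact support} is implied by \eqref{nicemomentestimates}. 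Because $\partial_{a}\psi\equiv 0$, the fusion term drops out, and we obtain
\begin{align*}
\int_{(0,\infty)^{2}}\varphi(v)f_{\epsilon}(\eta,t)\der \eta-\int_{(0,\infty)^{2}}\varphi(v)f_{\epsilon}(\eta,\overline{\sigma})\der \eta=\int_{\overline{\sigma}}^{t}\langle \mathbb{K}[f_{\epsilon}(s)],\psi\rangle\der s.
\end{align*}

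Passing to the limit on the left-hand side is immediate from Theorem \ref{existencelimitandcorrectsupport} and Lemma \ref{lemma dirac measure}: for every $s\in[\overline{\sigma},T]$, $f_{\epsilon}(s)\rightarrow F(v,s)\delta(a-c_{0}v^{\frac{2}{3}})$ in the weak-$^{\ast}$ sense, so the left-hand side converges to $\int F(v,t)\varphi(v)\der v-\int F(v,\overline{\sigma})\varphi(v)\der v$. The real content is the convergence of the bilinear coagulation term to
\begin{align*}
\int_{\overline{\sigma}}^{t}\!\int_{(0,\infty)^{2}}\!\!K(c_{0}v^{\frac{2}{3}},v,c_{0}v'^{\frac{2}{3}},v')F(v,s)F(v',s)\chi_{\varphi}(v,v')\der v'\der v\der s,
\end{align*}
where $\chi_{\varphi}(v,v'):=\varphi(v+v')-\varphi(v)-\varphi(v')$ depends only on volumes. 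The strategy is a truncation-plus-continuity argument. For $M>1$ large and $\delta_{1}>0$ small, I decompose the $(a,v,a',v')$-integration into the good region $G_{M,\delta_{1}}:=\{\tfrac{1}{M}\leq v,v'\leq M\}\cap\{a\leq c_{0}v^{\frac{2}{3}}+\delta_{1},\ a'\leq c_{0}v'^{\frac{2}{3}}+\delta_{1}\}$ and its complement. On the complement, a combination of Proposition \ref{solutions stay close to the isoperimetric line} (to bound the mass at positive distance from the isoperimetric line) and the uniform moment estimates \eqref{nicemomentestimates} (to bound contributions from $v+v^{-1}+v'+v'^{-1}$ large) makes the contribution arbitrarily small, uniformly in $\epsilon$. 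On $G_{M,\delta_{1}}$, the kernel $K$ and the increment $\chi_{\varphi}$ are continuous and bounded, so one can approximate $K(a,v,a',v')\chi_{\varphi}(v,v')$ uniformly by $K(c_{0}v^{\frac{2}{3}},v,c_{0}v'^{\frac{2}{3}},v')\chi_{\varphi}(v,v')$ with an error going to $0$ as $\delta_{1}\to 0$, after which the product measure $f_{\epsilon}(s)\otimes f_{\epsilon}(s)$ converges in the weak-$^{\ast}$ sense to $[F(v,s)\delta(a-c_{0}v^{\frac{2}{3}})]\otimes[F(v',s)\delta(a'-c_{0}v'^{\frac{2}{3}})]$ tested against a continuous compactly supported function in $(a,v,a',v')$.

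Finally, time integration is handled by uniform-in-$\epsilon$ $\textup{L}^{1}_{s}$ bounds on $\langle \mathbb{K}[f_{\epsilon}(s)],\psi\rangle$ arising again from \eqref{nicemomentestimates} and the upper bound in \eqref{lower_bound_kernel}, so dominated convergence yields the desired equation \eqref{equation with sigma}. The main obstacle I expect is the passage to the limit in the bilinear coagulation term: one has a product of two measures, each concentrating along the curve $a=c_{0}v^{\frac{2}{3}}$, tested against a kernel that is merely continuous and grows polynomially in $v,v'$ through \eqref{lower_bound_kernel}. The truncation of the $a,a'$ directions (via $\delta_{1}$) together with the quantitative concentration in Proposition \ref{solutions stay close to the isoperimetric line} is precisely the tool that overcomes this; getting the error estimates uniform in $\epsilon$ on the unbounded complement region requires simultaneously using the moment estimates in $v^{-1}$, $v^{\mu+3}$, $v^{\sigma(\mu+3)}$ and $a^{\mu+3}$, as in the proof of Proposition \ref{solutions stay close to the isoperimetric line}.
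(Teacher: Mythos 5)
Your proposal is correct and follows essentially the same route as the paper: test with a function of $v$ only so the fusion term vanishes, split the bilinear term into a thin tube around the isoperimetric line (where $K$ is approximated by its restriction to the line and the product measure converges weak-$^{\ast}$) and a remainder controlled by Proposition \ref{solutions stay close to the isoperimetric line} plus moment estimates. The only cosmetic difference is that the paper organizes the complement of your good region into two explicit pieces (tube--far and far--far), but the estimates are the same.
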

\begin{proof}
Let $\delta_{1},\delta_{2}\in(0,1)$. By Proposition \ref{solutions stay close to the isoperimetric line}, we have that there exists $\epsilon_{\delta_{1}, \delta_{2}}\in(0,1)$, such that
\begin{align}\label{main theo isoperimetric line}
\int_{\{a'>c_{0}v'^{\frac{2}{3}}+\delta_{1}\}}\Phi(\eta')f_{\epsilon}(\eta',t) \der \eta'\leq C\delta_{2},
\end{align}
for every $t\geq \overline{\sigma}>0$, every $\epsilon\leq\epsilon_{\delta_{1}, \delta_{2}}$ and for every $\Phi \in\textup{C}_{0}(\mathbb{R}_{>0}^{2})$ such that $||\Phi||_{\infty}\leq 1.$

Due to Proposition \ref{extension compact support}, we can test (\ref{epsilonic_convergence}) with $\varphi(a,v)=\tilde{\varphi}(v)$, where $\tilde{\varphi}\in\textup{C}^{1}_{0}(\mathbb{R}_{>0})$. Let $\delta_{1}$ as in (\ref{main theo isoperimetric line}). Denoting by $\chi_{\tilde{\varphi}}(v,v'):=\tilde{\varphi}(v+v')-\tilde{\varphi}(v)-\tilde{\varphi}(v')$, we have that
\begin{align}\label{main one dim}
&\int_{(0,\infty)^{2}}f_{\epsilon}(\eta,t)\tilde{\varphi}(v)\der \eta-\int_{(0,\infty)^{2}}f_{\epsilon}(\eta,\overline{\sigma})\tilde{\varphi}(v)\der \eta\nonumber\\
&=\frac{1}{2}\int_{\overline{\sigma}}^{t}\int_{(0,\infty)^{2}}\int_{(0,\infty)^{2}}K(\eta,\eta')[\tilde{\varphi}(v+v')-\tilde{\varphi}(v)-\tilde{\varphi}(v')]f_{\epsilon}(\eta,s)f_{\epsilon}(\eta',s)\der \eta \der \eta'\der s\nonumber\\
&=\frac{1}{2}\int_{\overline{\sigma}}^{t}\int_{\{a\in \textup{T}(v)\}}\int_{\{a'\in \textup{T}(v')\}}K(\eta,\eta')\chi_{\tilde{\varphi}}(v,v')f_{\epsilon}(\eta,s)f_{\epsilon}(\eta',s)\der \eta \der \eta'\der s\nonumber\\
&+\int_{\overline{\sigma}}^{t}\int_{\{a\in \textup{T}(v)\}}\int_{\{a'>c_{0}v'^{\frac{2}{3}}+\delta_{1}\}}K(\eta,\eta')\chi_{\tilde{\varphi}}(v,v')f_{\epsilon}(\eta,s)f_{\epsilon}(\eta',s)\der \eta \der \eta'\der s\nonumber\\
&+\frac{1}{2}\int_{\overline{\sigma}}^{t}\int_{\{a>c_{0}v^{\frac{2}{3}}+\delta_{1}\}}\int_{\{a'>c_{0}v'^{\frac{2}{3}}+\delta_{1}\}}K(\eta,\eta')\chi_{\tilde{\varphi}}(v,v')f_{\epsilon}(\eta,s)f_{\epsilon}(\eta',s)\der \eta \der \eta'\der s,
\end{align}
where we denoted $\textup{T}(v):=[c_{0}v^{\frac{2}{3}};c_{0}v^{\frac{2}{3}}+\delta_{1}]$. 

\textbf{Step 1.}
We first prove that, for every $t\geq \overline{\sigma}$, we have that
\begin{align*}
\frac{1}{2}\int_{\{a\in\textup{T}(v)\}}\int_{\{a'\in\textup{T}(v')\}}K(a,v,a',v')[\tilde{\varphi}(v+v')-\tilde{\varphi}(v)-\tilde{\varphi}(v')]f_{\epsilon}(\eta,t)f_{\epsilon}(\eta',t)\der \eta \der \eta'
\end{align*}
tends to
\begin{align*}
\frac{1}{2}\int_{(0,\infty)^{2}}K(c_{0}v^{\frac{2}{3}},v,c_{0}v'^{\frac{2}{3}},v')[\tilde{\varphi}(v+v')-\tilde{\varphi}(v)-\tilde{\varphi}(v')]F(v,t)F(v',t)\der v' \der v
\end{align*}
as $\epsilon\rightarrow 0.$

Step 1. a) We begin by proving that we are able to approximate the coagulation kernel near the isoperimetric line with a function only depending on the  $v$ variable, due to its continuity. In other words,
\begin{align*}
&3||\varphi||_{\infty}\int_{\{a\in\textup{T}(v)\}}\int_{\{a'\in\textup{T}(v')\}}|K(a,v,a',v')-K(c_{0}v^{\frac{2}{3}},v,c_{0}v'^{\frac{2}{3}},v')|f_{\epsilon}(\eta,t)f_{\epsilon}(\eta',t)\der \eta \der \eta'\\
&\leq C\int_{\{a\in\textup{T}(v);v\in[\frac{1}{M};M]\}}\int_{\{a'\in\textup{T}(v');v'\in[\frac{1}{M};M]\}}|K(a,v,a',v')-K(c_{0}v^{\frac{2}{3}},v,c_{0}v'^{\frac{2}{3}},v')|f_{\epsilon}(\eta,t)f_{\epsilon}(\eta',t)\der \eta \der \eta'\\
&+C\int_{\{v>M\}}\int|K(a,v,a',v')-K(c_{0}v^{\frac{2}{3}},v,c_{0}v'^{\frac{2}{3}},v')|f_{\epsilon}(\eta,t)f_{\epsilon}(\eta',t)\der \eta \der \eta'\\
&+C\int_{\{v<\frac{1}{M}\}}\int|K(a,v,a',v')-K(c_{0}v^{\frac{2}{3}},v,c_{0}v'^{\frac{2}{3}},v')|f_{\epsilon}(\eta,t)f_{\epsilon}(\eta',t)\der \eta \der \eta' =: \textup{I}_{1}+\textup{I}_{2}+\textup{I}_{3}.
\end{align*}
We can prove that $\textup{I}_{2}+\textup{I}_{3}$ gives a small contribution due to moment estimates. For $\textup{I}_{1}$, we use the fact that $|a-c_{0}v^{\frac{2}{3}}|\leq \delta_{1}$, the continuity of $K$ and that we work on a compact set, to deduce 
\begin{align}\label{i1 is small}
 \textup{I}_{1}\leq C\overline{\epsilon}\sup_{t\in[0,T]} M_{0,0}(f_{\epsilon}(t))^{2},
\end{align}
where $\overline{\epsilon}$ was chosen small but arbitrary and $M_{0,0}(f_{\epsilon})$ is uniformly bounded independently of $\epsilon\in(0,1)$. Notice that $\delta_{1}$ was chosen to be sufficiently small to satisfy both (\ref{main theo isoperimetric line}) and (\ref{i1 is small}). 

Step 1. b) Let $\overline{f},F$ be as in Lemma \ref{lemma dirac measure}. We prove now that
\begin{align}\label{term 1}
\int_{\{a\in\textup{T}(v)\}}\int_{\{a'\in\textup{T}(v')\}}K(c_{0}v^{\frac{2}{3}},v,c_{0}v'^{\frac{2}{3}},v')f_{\epsilon}(\eta,t)f_{\epsilon}(\eta',t)\der\eta'\der\eta
\end{align}
converges to
\begin{align}\label{term 2}
\int_{(0,\infty)}\int_{(0,\infty)}K(c_{0}v^{\frac{2}{3}},v,c_{0}v'^{\frac{2}{3}},v')F(v,t)F(v',t)\der v'\der v
\end{align}
as $\epsilon\rightarrow 0$.

This is done again by splitting the interval $(0,
\infty)^{2}$ into three regions, namely $\{v\leq \frac{1}{M}\}$, $\{v\geq M\}$ and $\{v\in[\frac{1}{M},M]\}$. For the regions $\{v\leq \frac{1}{M}\}$ and $\{v\geq M\}$, we can use moment estimates to prove that the terms (\ref{term 1}) and (\ref{term 2}) approach zero as $M\rightarrow\infty$. In the region $\{(v,v')\in[\frac{1}{M},M]^{2}\}$, we use that $f_{\epsilon}f_{\epsilon}\rightharpoonup \overline{f}  \textup{ } \overline{f}$ as $\epsilon\rightarrow 0$.

\textbf{Step 2.} We continue by proving that the remaining integrals in (\ref{main one dim}) converge to zero as $\epsilon\rightarrow 0$, for every $t\geq \overline{\sigma}$. We first prove that
\begin{align*}
\int_{\{a\in\textup{T}(v)\}}\int_{\{a'>c_{0}v'^{\frac{2}{3}}+\delta_{1}\}}K(a,v,a',v')f_{\epsilon}(\eta,t)f_{\epsilon}(\eta',t)\der \eta \der \eta'\rightarrow 0 \textup{ as } \epsilon\rightarrow 0.
\end{align*}
We have that there exists a constant $C>0$, independent of $\epsilon\in(0,1)$ such that $M_{0,-\alpha}(f_{\epsilon})+M_{0,\beta}(f_{\epsilon})\leq C$. 
Combining these moment bounds with Proposition \ref{solutions stay close to the isoperimetric line}, we obtain that
\begin{align*}
\int_{\{a\in\textup{T}(v)\}}\int_{\{a'>c_{0}v'^{\frac{2}{3}}+\delta_{1}\}}&K(a,v,a',v')|\tilde{\varphi}(v+v')-\tilde{\varphi}(v)-\tilde{\varphi}(v')|f_{\epsilon}(\eta,t)f_{\epsilon}(\eta',t)\der \eta \der \eta'\\
&\lesssim 3||\tilde{\varphi}||_{\infty} \int_{\{a\in\textup{T}(v)\}}v^{\beta}f_{\epsilon}(\eta,t)\der \eta\int_{\{a'>c_{0}v'^{\frac{2}{3}}+\delta_{1}\}}v'^{-\alpha}f_{\epsilon}(\eta',t) \der \eta'\\
&+3||\tilde{\varphi}||_{\infty} \int_{\{a\in\textup{T}(v)\}}v^{-\alpha}f_{\epsilon}(\eta,t)\der \eta\int_{\{a'>c_{0}v'^{\frac{2}{3}}+\delta_{1}\}}v'^{\beta}f_{\epsilon}(\eta',t) \der \eta'\leq C\delta_{2},
\end{align*}
where $\delta_{2}$ was taken as in (\ref{main theo isoperimetric line}). More precisely, in order to prove that
\begin{align}\label{last step needed}
\int_{\{a'>c_{0}v'^{\frac{2}{3}}+\delta_{1}\}}(v'^{\beta}+v'^{-\alpha})f_{\epsilon}(\eta',t) \der \eta'\leq C\delta_{2},
\end{align}
we notice that for the region $\{a'>c_{0}v'^{\frac{2}{3}}+\delta_{1}\}\cap[\frac{1}{M},M]^{2}$, for some sufficiently large $M>1$, we can use Proposition \ref{solutions stay close to the isoperimetric line} to deduce that the integral gives a sufficiently small contribution and then control the regions $\{a>M\}\cup\{a<\frac{1}{M}\}\cup\{v>M\}\cup\{v<\frac{1}{M}\}$ using moment estimates.

\textbf{Step 3.}
Lastly, we prove that, for every $t\geq\overline{\sigma}$, we have that
\begin{align*}
\int_{\{a>c_{0}v^{\frac{2}{3}}+\delta_{1}\}}\int_{\{a'>c_{0}v'^{\frac{2}{3}}+\delta_{1}\}}K(a,v,a',v')f_{\epsilon}(\eta,t)f_{\epsilon}(\eta',t)\der \eta \der \eta'\rightarrow 0 \textup{ as } \epsilon\rightarrow 0.
\end{align*}
As before, we obtain that
\begin{align*}
&\int_{\{a>c_{0}v^{\frac{2}{3}}+\delta_{1}\}}\int_{\{a'>c_{0}v'^{\frac{2}{3}}+\delta_{1}\}}K(a,v,a',v')f_{\epsilon}(\eta,t)f_{\epsilon}(\eta',t)\der \eta \der \eta'\\
&\lesssim 2\int_{\{a>c_{0}v^{\frac{2}{3}}+\delta_{1}\}}\int_{\{a'>c_{0}v'^{\frac{2}{3}}+\delta_{1}\}}v^{-\alpha}v'^{\beta}f_{\epsilon}(\eta,t)f_{\epsilon}(\eta',t)\der \eta \der \eta'\\
&\leq 2\int_{\{a>c_{0}v^{\frac{2}{3}}+\delta_{1}\}}v^{-\alpha}f_{\epsilon}(\eta,t)\der \eta\int_{\{a'>c_{0}v'^{\frac{2}{3}}+\delta_{1}\}}v'^{\beta}f_{\epsilon}(\eta',t) \der \eta'\lesssim \delta_{2}^{2},
\end{align*}
where for the last inequality we used the same argument as in (\ref{last step needed}). This concludes our proof.
\end{proof}
\begin{proof}[Proof of Theorem \ref{pass back to standard coagulation}] Due to Proposition \ref{behavior near initial data}, we can let $\overline{\sigma}\rightarrow 0$ in (\ref{equation with sigma}) using the fact that there exists $C(T)>0$ such that
\begin{align*}
    \sup_{t\in[0,T]}\int_{(0,\infty)}(v^{-1}+v^{2})F(v,t)\der v\leq C(T)
\end{align*}
due to the way $F$ was constructed (see (\ref{form of limit}), (\ref{initial datum})).
\end{proof}
\section{The case of negligible fusion}\label{the case n goes to infinity}
We now consider the form of solutions of (\ref{1 over epsilon strongfusioneq}) if we assume that $\Lambda\rightarrow\infty$.
\begin{prop}
Let $K:(0,\infty)^{4}\rightarrow [0,\infty)$ be a continuous kernel satisfying (\ref{kersym1}), 
(\ref{lower_bound_kernel}) and (\ref{alpha non neg}). Assume the fusion kernel $r\in\textup{C}^{1}(\mathbb{R}_{>0}^{2})$ satisfies (\ref{fusion_form}) and (\ref{ode_fusion}) with $\mu>0$. Assume in addition that $\int_{(0,\infty)^{2}}(v^{-\mu-2}+v^{\mu+2}+v^{\sigma(\mu+2)}+a^{\mu+2})f_{\textup{in}}(a,v)\der v\der a<\infty$. Let $T>0$. Then we can construct $f_{\epsilon}$ as in Definition \ref{definitiontimedependent} for equation (\ref{epsilonic_convergence n case}), for every $\epsilon\in(0,1)$. For this sequence, we have that there exists a constant $C(T)>0$, which is independent of $\epsilon\in(0,1)$, such that
\begin{align}\label{nicemomentestimates n case}
   \sup_{t\in[0,T]} \int_{(0,\infty)^{2}}(v^{-\mu-2}+v^{\mu+2}+v^{\sigma(\mu+2)}+a^{\mu+2})f_{\epsilon}(a,v,t)\der v\der a\leq C(T)
\end{align}
and that there exists a subsequence (which we do not relabel) and $\underline{f}\in\textup{C}([0,T];  \mathscr{M}^{I}_{+}(\mathbb{R}_{>0}^{2}))$ such that $f_{\epsilon}(t)\rightarrow\underline{f}(t)$ as $\epsilon\rightarrow 0$ in the sense of measures, for every $t\in[0,T]$.
\end{prop}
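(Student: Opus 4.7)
The plan is to follow the same three-step scheme used in Proposition~\ref{existence of solutions for the equation} and in the proof of Theorem~\ref{existencelimitandcorrectsupport}: first construct $f_{\epsilon}$ from a truncated problem via a fixed-point argument; then establish uniform moment estimates that are independent of $\epsilon\in(0,1)$; finally prove equicontinuity in time, apply Arzel\`a--Ascoli, and pass to the limit in the weak formulation~(\ref{epsilonic_convergence n case}). The crucial simplification compared with the fast-fusion case is that here the fusion term carries a factor $\epsilon$, so both the uniform bounds and the limit become easier, not harder.

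For the construction, the argument of Proposition~\ref{existence of solutions for the equation} applies verbatim after replacing $\tfrac{1}{\epsilon}$ by $\epsilon$ in the system of characteristics and in the truncated equation~(\ref{truncated epislonic convergence}); the fixed-point argument in the space $U_{\tilde{\epsilon},R}$ produces solutions of a truncated problem which converge, as $\tilde{\epsilon},\delta\to 0$ and $R\to\infty$, to $f_{\epsilon}\in\textup{C}([0,T];\mathscr{M}^{I}_{+}(\mathbb{R}_{>0}^{2}))$ solving~(\ref{epsilonic_convergence n case}). For the moment estimates, I would test~(\ref{epsilonic_convergence n case}) (via the analogue of Proposition~\ref{extension compact support}) successively with $\varphi=v^{-\mu-2},\,v^{\mu+2},\,v^{\sigma(\mu+2)}$ and $\varphi=a^{\mu+2}$. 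The fusion contribution is, in each case, of the form $\epsilon\int a^{\mu+1}v^{\sigma}(\cdot)f_{\epsilon}\,\der\eta$ and is controlled by Young's inequality by $M_{\mu+2,0}(f_{\epsilon})+M_{0,\sigma(\mu+2)}(f_{\epsilon})$ up to constants, so it is absorbed into the left-hand side; the coagulation contribution is handled as in Subsection~3.3 of \cite{cristian2022coagulation} using $\beta-\alpha\in(0,1)$. A Gronwall argument then closes the four moments of~(\ref{nicemomentestimates n case}).

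For equicontinuity in time, the estimate~(\ref{equicontinuity}) carries over directly: for $\varphi\in\textup{C}^{1}_{\textup{c}}(\mathbb{R}_{>0}^{2})$ and $0\le s\le t\le T$,
\begin{align*}
\Big|\int_{(0,\infty)^{2}}\varphi(\eta)\,[f_{\epsilon}(\der\eta,t)-f_{\epsilon}(\der\eta,s)]\Big|
\lesssim |t-s|\,\bigl(\|\varphi\|_{\infty}+\epsilon\|\partial_{a}\varphi\|_{\infty}\bigr),
\end{align*}
where the coagulation part uses the uniform bound $M_{0,-\alpha}(f_{\epsilon})M_{0,\beta}(f_{\epsilon})\lesssim 1$ and the fusion part uses~(\ref{nicemomentestimates n case}) together with its $\epsilon$ prefactor. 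Combining equicontinuity with the tightness provided by the uniform bounds on $v^{-\mu-2}$, $v^{\mu+2}$ and $a^{\mu+2}$, Arzel\`a--Ascoli yields a subsequence and a limit $\underline{f}\in\textup{C}([0,T];\mathscr{M}^{I}_{+}(\mathbb{R}_{>0}^{2}))$ with the stated convergence.

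The only delicate point, and the main obstacle, is the passage to the limit in~(\ref{epsilonic_convergence n case}) to produce~(\ref{two dim final}). The fusion term vanishes in the limit because it is bounded by $\epsilon\,\|\partial_{a}\varphi\|_{\infty}\,T\sup_{t}\bigl(M_{\mu+2,0}+M_{0,\sigma(\mu+2)}\bigr)(f_{\epsilon}(t))\to 0$. For the bilinear coagulation term, weak-$^{\ast}$ convergence of $f_{\epsilon}(t)$ does not immediately give convergence of $\iint K(\eta,\eta')\chi_{\varphi}(\eta,\eta')f_{\epsilon}(\eta,s)f_{\epsilon}(\eta',s)\,\der\eta\der\eta'$ since the integrand is not compactly supported in $(\eta,\eta')$. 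I would split $(0,\infty)^{2}\times(0,\infty)^{2}$ into the compact set $[\tfrac{1}{M},M]^{2}\times[\tfrac{1}{M},M]^{2}$, on which $f_{\epsilon}\otimes f_{\epsilon}\rightharpoonup\underline{f}\otimes\underline{f}$ weakly-$^{\ast}$ by Fubini and the continuity of $K\chi_{\varphi}$, plus a remainder controlled uniformly in $\epsilon$ by the moment bounds~(\ref{nicemomentestimates n case}) on $v^{-\mu-2}$, $v^{\mu+2}$ and $a^{\mu+2}$, letting $M\to\infty$. This yields~(\ref{two dim final}); everything else is a direct transcription of arguments already carried out in Section~\ref{epsilon goes to zero}.
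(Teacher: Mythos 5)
Your overall scheme is exactly the paper's: construct $f_{\epsilon}$ from the truncated problem of Proposition~\ref{existence of solutions for the equation} (the paper simply cites that proposition for existence and the bounds (\ref{nicemomentestimates n case})), prove equicontinuity in time using the $\epsilon$ prefactor on the fusion term, and conclude by Arzel\`a--Ascoli; the limit passage to (\ref{two dim final}) that you sketch at the end is not part of this proposition but of Theorem~\ref{case of slow fusion}, and there too your argument matches the paper's.

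There is, however, one concrete step that fails as written: your claim that, when deriving the bound on $M_{\mu+2,0}(f_{\epsilon})$, the fusion contribution ``is of the form $\epsilon\int a^{\mu+1}v^{\sigma}(\cdot)f_{\epsilon}\,\der\eta$ and is controlled by Young's inequality by $M_{\mu+2,0}+M_{0,\sigma(\mu+2)}$.'' Testing with $\varphi=a^{\mu+2}$ gives $\partial_{a}\varphi=(\mu+2)a^{\mu+1}$, so in absolute value the fusion term is of order $\epsilon\int a^{2\mu+2}v^{\sigma}f_{\epsilon}\,\der\eta$, and for $\mu>0$ the exponent $2\mu+2>\mu+2$ cannot be brought down to $M_{\mu+2,0}+M_{0,\sigma(\mu+2)}$ by Young's inequality (the required exponent would be $p=\tfrac{\mu+2}{2\mu+2}<1$). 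The estimate is rescued not by absorption but by sign: on $\supp f_{\epsilon}\subseteq\{a\geq c_{0}v^{2/3}\}$ one has $r(\eta)(c_{0}v^{2/3}-a)\partial_{a}\varphi\leq 0$ for any nondecreasing $\varphi$ in $a$, so the fusion term only decreases the area moments and can simply be dropped when proving the upper bound; the remaining coagulation contribution is handled as in the remark following Proposition~\ref{existence of solutions for the equation} via $a^{\mu+1}v^{\beta}\lesssim a^{\mu+2}+v^{\beta(\mu+2)}$. Your Young's inequality step is correct where you actually use it in the equicontinuity estimate (there $|\partial_{a}\varphi|$ is bounded, so only $a^{\mu+1}v^{\sigma}$ appears), but not in the moment estimate itself. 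With that correction the proof is complete and coincides with the paper's.
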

\begin{proof}
Existence of solutions $f_{\epsilon}$ which satisfy (\ref{nicemomentestimates n case}) was proven in Proposition \ref{existence of solutions for the equation}. We are left to prove equicontinuity in time in order to conclude our proof. 

Let $M>1$. It suffices to prove equicontinuity for a fixed function $\varphi\in\compactfundif$ such that $\supp\varphi\in[\frac{1}{M},M]^{2}$. In order to pass to functions $\varphi\in\compactfun$ such that $\supp\varphi\in[\frac{1}{M},M]^{2}$,  we can use that there exists $\varphi_{n}\in\compactfundif$ such that $||\varphi-\varphi_{n}||_{\infty}\leq \frac{1}{n}$. In order to extend to functions $\varphi\in\vanishfun$, we can then use moment estimates by letting $M$ become sufficiently large.

Fix $\varphi\in\compactfundif$. Let $s,t\in[0,T]$. Assume without loss of generality $s\leq t$. Then
\begin{align}
    &\bigg|\int_{(0,\infty)^{2}}[f_{\epsilon}(\eta,t)-f_{\epsilon}(\eta,s)]\varphi(\eta)\der \eta\bigg|\leq\nonumber\\
    &\lesssim 3||\varphi||_{\infty}\int_{s}^{t}M_{0,-\alpha}(f_{\epsilon}(z))M_{0,\beta}(f_{\epsilon}(z))\der z+\epsilon\int_{s}^{t}\int_{(0,\infty)^{2}}a^{\mu}v^{\sigma}(a+c_{0}v^{\frac{2}{3}})f_{\epsilon}(\eta,z)|\partial_{a}\varphi(\eta)|\der \eta\der z\nonumber\\
    &\leq 3||\varphi||_{\infty}\int_{s}^{t}M_{0,-\alpha}(f_{\epsilon}(z))M_{0,\beta}(f_{\epsilon}(z))\der z+C||\partial_{a}\varphi||_{\infty}\int_{s}^{t}\big(M_{\mu+2,0}(f_{\epsilon}(z))+M_{0,\sigma(\mu+2)}(f_{\epsilon}(z))\big)\der z\nonumber\\
    &\leq C|t-s|.
\end{align} 
We deduce then from Arzelà–Ascoli theorem that there exists a subsequence of $\{f_{\epsilon}\}$, which we do not relabel, and an $\underline{f}\in\textup{C}([0,T];\mathscr{M}^{I}_{+}(\mathbb{R}_{>0}^{2})),$ such that $f_{\epsilon}(t)$ converge to $\underline{f}(t)$ in the weak-$^{\ast}$ topology as $\epsilon\rightarrow 0$, for every $t\in[0,T].$
\end{proof}
\begin{proof}[Proof of Theorem \ref{case of slow fusion}]
In order to estimate the fusion term, notice that
\begin{align*}
\epsilon \bigg|\int_{(0,\infty)^{2}}r(\eta)(c_{0}v^{\frac{2}{3}}-a)f_{\epsilon}(\eta,s)\partial_{a}\varphi(\eta,s)\der \eta\bigg|&\lesssim \epsilon ||\partial_{a}\varphi||_{\infty}\sup_{t\in[0,T]}\big(M_{\mu+2,0}(f_{\epsilon}(t))+M_{0,\sigma(\mu+2)}(f_{\epsilon}(t))\big)\\
    &\leq \epsilon C(T)\rightarrow 0 \textup{ as }\epsilon\rightarrow 0.    \end{align*}
To prove that, for fixed $\varphi\in\compactfundif$, the term with the coagulation kernel in (\ref{epsilonic_convergence n case}) converges to the term with the coagulation kernel in (\ref{two dim final}), standard methods used in the study of coagulation equation are used and hence we omit the rest of the proof.
\end{proof}
\begin{prop}\label{extension compact support n infinity}
Let $\mu>0$. Then we can test equation (\ref{two dim final}) with $\varphi(\eta)\equiv a$ or $\varphi(\eta)\equiv v$ if
\begin{align}\label{moment estimates to lose compact support n infinity}
    \sup_{t\in[0,T]} \int_{(0,\infty)^{2}}(a^{2}+v^{-2}+v^{2})\underline{f}(a,v,t)\leq C(T).
\end{align}
\end{prop}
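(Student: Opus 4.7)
The plan is to test (\ref{two dim final}) with compactly supported smooth approximations of $\varphi(\eta)=a$ (and analogously $\varphi(\eta)=v$), and then pass to the limit by dominated convergence. Since both functions are linear, $\chi_{\varphi}(\eta,\eta')=\varphi(\eta+\eta')-\varphi(\eta)-\varphi(\eta')\equiv 0$, so the right-hand side of (\ref{two dim final}) will vanish in the limit, and the proposition will in fact yield the mass conservation asserted in Remark \ref{remark conservation mass}.

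Concretely, I would take a cutoff family $\zeta_n\in\textup{C}^{1}_{\textup{c}}(\mathbb{R}^{2}_{>0})$ with $\zeta_n\equiv 1$ on $[\frac{1}{n},n]^2$, $\zeta_n\equiv 0$ outside $[\frac{1}{2n},2n]^2$, and $0\leq\zeta_n\leq 1$, and define $\varphi_n(\eta):=a\zeta_n(\eta)\in\textup{C}^{1}_{\textup{c}}(\mathbb{R}^{2}_{>0})$, which is an admissible test function in (\ref{two dim final}). On the left-hand side, $\varphi_n(\eta)\underline{f}(\eta,t)\to a\underline{f}(\eta,t)$ pointwise, and the majorant $a\underline{f}(\eta,t)$ is integrable because $a\lesssim 1+a^2$ and $1\lesssim v^{-2}+v^2$, so (\ref{moment estimates to lose compact support n infinity}) gives $\int a\underline{f}(\eta,t)\der\eta\leq C(T)$. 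Dominated convergence applies on both sides, giving the desired identity for $\varphi=a$ (and an identical argument for $\varphi=v$).

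The one step requiring care is producing an $L^1$-majorant for the right-hand side triple integral. Using $0\leq\zeta_n\leq 1$ one has the uniform bound $|\chi_{\varphi_n}(\eta,\eta')|\leq 2(a+a')$, and combining this with the upper bound in (\ref{lower_bound_kernel}) on $K$ reduces the question to the finiteness of
\begin{equation*}
\int_0^T\!\!\int\!\!\int (v^{-\alpha}v'^{\beta}+v^{\beta}v'^{-\alpha})(a+a')\underline{f}(\eta,s)\underline{f}(\eta',s)\der\eta'\der\eta\der s,
\end{equation*}
which factorises into products of the moments $M_{0,-\alpha}$, $M_{0,\beta}$, $M_{1,-\alpha}$, $M_{1,\beta}$ of $\underline{f}(s)$. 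Since $\alpha<\beta<1<2$ by (\ref{alpha non neg}), each such moment is controlled by $M_{2,0}+M_{0,2}+M_{0,-2}$ via elementary inequalities of Young type: $av^{-\alpha}\lesssim a^2+v^{-2\alpha}\lesssim a^2+1+v^{-2}$, $av^{\beta}\lesssim a^2+1+v^{2}$, $v^{\beta}\lesssim 1+v\lesssim 1+v^{-2}+v^{2}$, and similarly for $v^{-\alpha}$. Under (\ref{moment estimates to lose compact support n infinity}) all of these are uniformly bounded in $s\in[0,T]$ by some $C(T)$, supplying the desired majorant.

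Since $\chi_{\varphi_n}(\eta,\eta')\to\chi_a(\eta,\eta')=0$ pointwise as $n\to\infty$, dominated convergence forces the right-hand side to vanish in the limit, and we conclude $\int a\underline{f}(\eta,t)\der\eta=\int af_{\textup{in}}(\eta)\der\eta$; the $\varphi=v$ case is verbatim the same with $v+v'$ replacing $a+a'$ in the bound on $\chi_{\varphi_n}$. No genuine obstacle arises; the only mildly technical point is the elementary moment bookkeeping needed to produce the dominating function.
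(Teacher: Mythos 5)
Your proposal is correct and follows essentially the same route as the paper: the paper's proof of this proposition simply refers back to Proposition \ref{extension compact support}, which uses exactly the cutoff family $\zeta_n$, dominated convergence, and moment bookkeeping that you carry out (here without any fusion term to control). Your domination of $|\chi_{\varphi_n}|$ by $2(a+a')$ and the reduction of the resulting moments to those in (\ref{moment estimates to lose compact support n infinity}) via $\alpha<\beta<1$ are the intended estimates, and your observation that the right-hand side vanishes in the limit is precisely what the paper then uses in the proof of Remark \ref{remark conservation mass}.
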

\begin{proof}
The proof is analogous with the proof of Proposition \ref{extension compact support} using the moment estimates (\ref{moment estimates to lose compact support n infinity}).
\end{proof}
\begin{proof}[Proof of Remark \ref{remark conservation mass}]
From Proposition \ref{extension compact support n infinity} we can test equation (\ref{two dim final}) with $\varphi(\eta)\equiv a$ or $\varphi(\eta)\equiv v$ and so the desired quantities are preserved since
\begin{align*}
    \langle\mathbb{K}[\underline{f}],a\rangle=0 \textup{ and }   \langle\mathbb{K}[\underline{f}],v\rangle=0.
\end{align*}
\end{proof}
\subsection*{Acknowledgements}
 The authors gratefully acknowledge the financial support of the collaborative
research centre The mathematics of emerging effects (CRC 1060, Project-ID 211504053) and Bonn International Graduate School of Mathematics at the Hausdorff Center for Mathematics (EXC 2047/1, Project-ID 390685813) funded through the Deutsche Forschungsgemeinschaft (DFG, German Research Foundation). The funders had no role in study design, analysis, decision to publish, or preparation of the manuscript.

\subsubsection*{Statements and Declarations}

\textbf{Conflict of interest} The authors declare that they have no conflict of interest.

\textbf{Data availability} Data sharing not applicable to this article as no datasets were generated or analysed during the current study.

\printbibliography

\end{document}